\newtheorem{theorem}{Theorem}[section]
\newtheorem{lemma}[theorem]{Lemma}
\theoremstyle{definition}
\newtheorem{definition}[theorem]{Definition}
\newtheorem{example}[theorem]{Example}
\theoremstyle{remark}
\newtheorem{remark}[theorem]{Remark}
\numberwithin{equation}{section}
\begin{document}
\setcounter{page}{1}

\title[Boundedness of the dyadic maximal function on graded Lie groups ]{Boundedness of the dyadic maximal function on graded Lie groups}

\author[D. Cardona]{Duv\'an Cardona}
\address{
  Duv\'an Cardona:
  \endgraf
  Department of Mathematics: Analysis, Logic and Discrete Mathematics
  \endgraf
  Ghent University, Belgium
  \endgraf
  {\it E-mail address} {\rm duvanc306@gmail.com, duvan.cardonasanchez@ugent.be}
  }
   \author[J. Delgado]{Julio Delgado}
\address{
  Julio Delgado:
  \endgraf
  Departmento de Matematicas
  \endgraf
  Universidad del Valle
  \endgraf
  Cali-Colombia
  \endgraf
    {\it E-mail address} {\rm delgado.julio@correounivalle.edu.co}}
\author[M. Ruzhansky]{Michael Ruzhansky}
\address{
  Michael Ruzhansky:
  \endgraf
  Department of Mathematics: Analysis, Logic and Discrete Mathematics
  \endgraf
  Ghent University, Belgium
  \endgraf
 and
  \endgraf
  School of Mathematical Sciences
  \endgraf
  Queen Mary University of London
  \endgraf
  United Kingdom
  \endgraf
  {\it E-mail address} {\rm michael.ruzhansky@ugent.be, m.ruzhansky@qmul.ac.uk}
  }

\thanks{The authors are supported  by the FWO  Odysseus  1  grant  G.0H94.18N:  Analysis  and  Partial Differential Equations and by the Methusalem programme of the Ghent University Special Research Fund (BOF)
(Grant number 01M01021). J. Delgado is also supported by Vice. Inv.Universidad del Valle Grant CI 71329, MathAmSud and Minciencias-Colombia under the project MATHAMSUD21-MATH-03. Duv\'an Cardona was supported by the Research Foundation-Flanders
(FWO) under the postdoctoral
grant No 1204824N.  Michael Ruzhansky is also supported  by EPSRC grant 
EP/R003025/2.
}

     \keywords{Dyadic maximal function, nilpotent Lie groups, graded Lie groups, Calder\'on theorem, Coifman-Weiss theory}
     \subjclass[2010]{35S30, 42B20; Secondary 42B37, 42B35}

\begin{abstract} Let  $1<p\leq  \infty$ and let $n\geq 2.$ It was proved independently by C. Calder\'on, R. Coifman and G. Weiss that the dyadic maximal function
\begin{equation*}
    \mathcal{M}^{d\sigma}_Df(x)=\sup_{j\in\mathbb{Z}}\left|\smallint\limits_{\mathbb{S}^{n-1}}f(x-2^jy)d\sigma(y)\right|
\end{equation*} is a bounded operator on $L^p(\mathbb{R}^n)$  where $d\sigma(y)$ is the surface measure on $\mathbb{S}^{n-1}.$  In this paper we prove an analogue of  this result on arbitrary graded Lie groups. More precisely, to any finite Borel measure  $d\sigma$ with compact support on a graded Lie group $G,$ we associate the corresponding dyadic maximal function $\mathcal{M}_D^{d\sigma}$ using the homogeneous structure of the group. Then, we prove  a criterion in terms of the order (at zero and at infinity) of the group Fourier transform $\widehat{d\sigma}$ of $d\sigma$ with respect to a fixed Rockland operator $\mathcal{R}$ on $G$ that assures the boundedness of $\mathcal{M}_D^{d\sigma}$ on  $L^p(G)$ for all $1<p\leq \infty.$   
\end{abstract} 

\maketitle

\tableofcontents
\allowdisplaybreaks

\section{Introduction}

\subsection{Outline}
For more than fifty years (dating back to the work of Folland and Stein \cite{FollandStein1982}
in the 1970s), there has been an extensive  program to generalise the techniques
from the real-variable Euclidean  harmonic analysis  to the more general setting
of the nilpotent Lie groups. This  is particularly motivated by applications
to degenerate partial differential operators (see e.g. Rothschild and Stein \cite{RothschildStein76}), where the usual methods
on the Euclidean space are not completely suitable. Among the fundamental operators of the Euclidean harmonic analysis, are the full maximal function and its dyadic counterpart. Contributing to the aforementioned program, the aim of this work is to study the $L^p$-boundedness of the dyadic maximal function on a nilpotent Lie group $G.$ Since our criteria involve that the group admits left-invariant hypoelliptic partial differential operators we assume that $G$ is a graded Lie group in view of the Helffer and Nourrigat solution of the Rockland conjecture, see \cite{HelfferNourrigat}.    

To the best of our knowledge, the study of the $L^p$-boundedness of the spherical averages started in a satisfactory way  with Stein (inspired by the 1976's work of  Nagel,  Rivi\`ere and  Wainger \cite{Nagel1976}). Indeed, consider the full-maximal function on $\mathbb{R}^n$
\begin{equation}\label{Full}
    \mathcal{M}^{d\sigma}_{F}f(x)=\sup_{r>0}\left|\smallint\limits_{\mathbb{S}^{n-1}}f(x-ry)d\sigma(y)\right|,
\end{equation}where $\sigma$ is the surface measure on the sphere $\mathbb{S}^{n-1}.$ A remarkable result due to Stein in \cite{Stein1976}, proved the boundedness of $\mathcal{M}^{d\sigma}_{F}$ from  $L^p(\mathbb{R}^n)$ to itself, if and only if $p>\frac{n}{n-1},$ for all $n\geq 3.$ Then, the lower dimensional case $n=2$ was proved by Bourgain in \cite{Bourgain1986}. 
Additionally,   C.  Calder\'on in \cite{Calderon1979}, proved that the dyadic maximal function
\begin{equation}\label{Dyadic:rn}
   \mathcal{M}^{d\sigma}_Df(x)=\sup_{j\in \mathbb{Z}}\left|\smallint\limits_{\mathbb{S}^{n-1}}f(x-2^jy)d\sigma(y)\right|,
\end{equation}can be extended to a bounded operator on $L^p(\mathbb{R}^n),$ for all $1<p\leq \infty,$ for any $n\geq 2.$  It was observed by S. Wainger  that the $L^p$-boundedness of the dyadic maximal function  was also proved  independently by Coifman and Weiss in \cite{CoifmanWeiss1978}. Other proofs for the $L^p$-boundedness of \eqref{Full} when $n\geq 3$ can be found e.g. in Carbery \cite{Carbery},  Cowling and Mauceri \cite{CowlingMaceuri}, Rubio de Francia \cite{RubioDeFrancia},   Duoandikoetxea and Rubio de Francia \cite{Duoandikoetxea:RubioDeFrancia:86}, and for Bourgain's result  for $n=2$ a new proof was given by Mockenhaupt,  Seeger, and  Sogge \cite{Mockenhaupt1992}.

The study of the full maximal function and its dyadic version has been mainly concentrated in the context of the Heisenberg group $\mathbb{H}_n$ and on two-steps nilpotent Lie groups. More precisely, by considering the unit sphere $\mathbb{S}_{\mathbb{H}_n,K}$ with respect to the Kor\'anyi norm $|(z,t)|=(|z|^4+16t^2)^{\frac{1}{4}}$ there is a unique Rad\'on measure $d\sigma$ that makes valid the polar decomposition formula. Then, Cowling in \cite{Cowling79}  proved the $L^p$-boundedness of the corresponding full-maximal function $\mathcal{M}^{d\sigma}_F.$  Extensions of Cowling's result have been obtained e.g. by Schmidt in \cite{Schmidt}   to hypersurfaces with non-vanishing rotational curvature on nilpotent Lie groups and for two steps nilpotent Lie groups by Fischer \cite{Fischer2006}.
The $L^p$-boundedness for the Lacunary  maximal function in this setting has been proved recently by Ganguly and  Thangavelu \cite{GangulyThangavelu2021} on $\mathbb{H}_n$ for all $n\geq 2.$ The approach in \cite{GangulyThangavelu2021} combines the ideas of Cowling \cite{Cowling79} with the sparse techniques developed (in the Euclidean setting) by Lacey in \cite{Lacey2019}. As for the spherical maximal function on the Heisenberg group for the surface measure on the complex sphere $\mathbb{S}_{\mathbb{H}_n,r}:=\{(z,0):|z|=r\},$ we refer the reader to Nevo and Thangavelu \cite{NevoThangavelu}. The result in \cite{NevoThangavelu} was improved independently by Narayanan and  Thangavelu \cite{NarayananThangavelu2004} and by  M\"uller and Seeger \cite{MullerSeeger2004}.  Moreover, in \cite{BagchiHaitRoncalThangavelu2018}, Bagchi, Hait, Roncal and Thangavelu have  proved an analogue of Calder\'on’s theorem for the associated lacunary spherical maximal function on $\mathbb{H}_n$ with $n\geq 2$. For $n=1$ the $L^p$-boundendess of the corresponding full maximal function (restricted to a class of radial functions) has been proved by Beltr\'an, Guo, Hickman and Seeger in \cite{BeltranGuoHickmanSeeger}.

\subsection{The main result}

Let $G$ be a homogeneous Lie group. Let us consider its corresponding family of dilations (see Definition \ref{dilations:group})
$$ D_{r}:G\rightarrow G,\,\,x\mapsto D_{r}(x)\equiv r\cdot x,\,x\in G.$$  Consider the dyadic maximal function
\begin{equation}\label{Maximal:Function:Graded}
    \mathcal{M}^{d\sigma}_Df(x)=\sup_{j\in \mathbb{Z}}\left|\smallint\limits_{G}f\left(x\cdot (2^j\cdot y)^{-1}\right)d\sigma(y)\right|,
\end{equation} associated to an arbitrary finite Borel measure $d\sigma$   with compact support on $G.$ We require the existence of (Rockland operators which are) left-invariant hypoelliptic partial differential operators on the group and then the group has to be graded (see \cite[Page 172]{FischerRuzhanskyBook} and Definition \ref{graded:Lie:groups}).  Then, a positive Rockland operator  $\mathcal{R}$ is a group Fourier multiplier by the operator-valued function $\pi(\mathcal{R}),$ (that is the infinitesimal representation of the operator,
defined at any irreducible and unitary representation $\pi$ of the unitary dual $\widehat{G}$ of $G$ as in \eqref{Symbol:R}). In order to analyse the $L^p$-boundedness of the Dyadic maximal function \eqref{Maximal:Function:Graded} we will assume that for some $a>0,$ the measure $d\sigma$ in \eqref{Maximal:Function:Graded} satisfies the group  Fourier transform condition
\begin{equation}\label{FT;Condition:Measure:Intro}
\max_{\pm}\sup_{\pi\in\widehat{G}}\Vert\pi(\mathcal{R})^{\pm \frac{a}{\nu}}\widehat{d\sigma}(\pi)\Vert_{\textnormal{op}}<\infty,
\end{equation}where $\nu$ is the homogeneous degree of the operator $\mathcal{R},$ (in the case of the positive Laplacian $-\Delta,$ $\nu=2$). Then, according to the discussion above,  \eqref{FT;Condition:Measure:Intro} says that $\widehat{d\sigma}(\pi)$ has order  $+a$ at infinity and $-a$ at zero (with respect to the spectrum of any $\pi(\mathcal{R})$).

The following criterion is the main theorem of this work.
\begin{theorem}\label{Main:Thn} Let $d\sigma$ be a finite Borel measure of compact support on a graded Lie group $G.$ Let $\mathcal{R}$ be a positive Rockland operator on $G$ of homogeneous degree $\nu>0.$ Assume that for some $a>0$ the Fourier transform of $d\sigma$ satisfies the growth estimate
\begin{equation}\label{FT;Condition:Measure}
\max_{\pm}  \sup_{\pi\in\widehat{G}}\Vert\pi(\mathcal{R})^{\pm \frac{a}{\nu}}\widehat{d\sigma}(\pi)\Vert_{\textnormal{op}}<\infty.
\end{equation}
Then, the dyadic maximal function
$\mathcal{M}^{d\sigma}_D:L^p(G)\rightarrow L^p(G)$ can be extended to a bounded operator for all $1<p\leq \infty.$
\end{theorem}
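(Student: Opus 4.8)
The plan is to split $d\sigma$ into dyadic pieces with respect to the spectrum of $\mathcal{R}$, estimate each piece separately on $L^2$ and on $L^p$, and sum a geometric series. Let $Q$ denote the homogeneous dimension of $G$ and fix $\psi\in C_c^\infty(0,\infty)$ with $\sum_{k\in\mathbb{Z}}\psi(2^{-k\nu}\lambda)=1$ for all $\lambda>0$. Define $\sigma_k$ by $\widehat{\sigma_k}(\pi)=\psi(2^{-k\nu}\pi(\mathcal{R}))\,\widehat{d\sigma}(\pi)$; equivalently $\sigma_k=d\sigma*\kappa_k$, where $\kappa_k$ is the convolution kernel of the spectral multiplier $\psi(2^{-k\nu}\mathcal{R})$, which is a Schwartz function on $G$ by Hulanicki's theorem. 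Then $d\sigma=\sum_k\sigma_k$, so $\mathcal{M}^{d\sigma}_Df\le\sum_k\mathcal{M}^{\sigma_k}_Df$. On the spectral support of $\psi(2^{-k\nu}\cdot)$ one has $\pi(\mathcal{R})\sim 2^{k\nu}$, hence $\|\psi(2^{-k\nu}\pi(\mathcal{R}))\,\pi(\mathcal{R})^{\mp a/\nu}\|_{\textnormal{op}}\lesssim 2^{\mp ka}$ by the functional calculus; combining this (with the sign chosen so that $2^{\mp ka}=2^{-a|k|}$) with the boundedness of $\pi(\mathcal{R})^{\pm a/\nu}\widehat{d\sigma}(\pi)$ granted by \eqref{FT;Condition:Measure}, we obtain
\begin{equation*}
\sup_{\pi\in\widehat{G}}\|\widehat{\sigma_k}(\pi)\|_{\textnormal{op}}\lesssim 2^{-a|k|},\qquad k\in\mathbb{Z},
\end{equation*}
and the same bound holds for every dilate $(\sigma_k)_{2^j}$ since the dilations act by automorphisms on $\widehat{G}$. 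This is the only point where both the order at infinity and the order at zero of $\widehat{d\sigma}$ are used.

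Next I would prove the weighted $L^2$ estimate $\|\mathcal{M}^{\sigma_k}_Df\|_{L^2(G)}\lesssim 2^{-a|k|}\|f\|_{L^2(G)}$. Bounding the supremum by the $\ell^2$ square function, $\mathcal{M}^{\sigma_k}_Df(x)^2\le\sum_j|f*(\sigma_k)_{2^j}(x)|^2$, and observing that $(\sigma_k)_{2^j}$ is spectrally localized with respect to $\mathcal{R}$ at scale $2^{(k-j)\nu}$, so that $f*(\sigma_k)_{2^j}=\big[\widetilde\psi(2^{-(k-j)\nu}\mathcal{R})f\big]*(\sigma_k)_{2^j}$ for a slightly widened $\widetilde\psi$, the Plancherel theorem on $G$ gives
\begin{equation*}
\|\mathcal{M}^{\sigma_k}_Df\|_{L^2}^2\le\sum_j\|\widehat{(\sigma_k)_{2^j}}\|_\infty^2\,\big\|\widetilde\psi(2^{-(k-j)\nu}\mathcal{R})f\big\|_{L^2}^2\lesssim 2^{-2a|k|}\sum_{l}\big\|\widetilde\psi(2^{-l\nu}\mathcal{R})f\big\|_{L^2}^2\lesssim 2^{-2a|k|}\|f\|_{L^2}^2,
\end{equation*}
the last inequality being the almost-orthogonality relation $\sum_l|\widetilde\psi(2^{-l\nu}\lambda)|^2\lesssim 1$, i.e. the Littlewood--Paley theory associated with $\mathcal{R}$.

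It remains to bound $\mathcal{M}^{\sigma_k}_D$ on $L^p(G)$ for $1<p<\infty$ with only polynomial growth in $k$. To this end I would view $T_kf:=\{f*(\sigma_k)_{2^j}\}_{j\in\mathbb{Z}}$ as a convolution operator valued in $\ell^2(\mathbb{Z})$, with kernel $K_k(x):=\{(\sigma_k)_{2^j}(x)\}_j$, and verify the vector-valued Calder\'on--Zygmund estimates on the space of homogeneous type $(G,|\cdot|,dx)$: the size bound $\|K_k(x)\|_{\ell^2}\lesssim(1+|k|)^{N}|x|^{-Q}$ and the H\"ormander regularity condition
\begin{equation*}
\int_{\{|x|\ge 2|z|\}}\big\|K_k(xz^{-1})-K_k(x)\big\|_{\ell^2}\,dx\lesssim(1+|k|)^{N}
\end{equation*}
for some $N$ independent of $k$. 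These follow from the rapid decay of $\kappa_k$ together with the compact support of $d\sigma$, by summing the contributions over the scales $j$ against $|x|^{-Q}$; this is the place where the fact that $d\sigma$ is merely a measure — so that $\sigma_k$ is smooth but only rapidly decreasing rather than compactly supported — has to be handled with care. Granting these estimates, the Coifman--Weiss vector-valued singular integral theorem yields $\|T_k\|_{L^p(G)\to L^p(G,\ell^2)}\lesssim(1+|k|)^{N}$ for all $1<p<\infty$, whence $\|\mathcal{M}^{\sigma_k}_Df\|_{L^p}\le\|T_kf\|_{L^p(G,\ell^2)}\lesssim(1+|k|)^{N}\|f\|_{L^p}$.

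Finally, interpolating the decaying $L^2$ bound of the second step with the polynomially growing $L^p$ bound of the third step gives, for each fixed $p\in(1,\infty)$, a number $\theta=\theta(p)\in(0,1)$ with $\|\mathcal{M}^{\sigma_k}_D\|_{L^p\to L^p}\lesssim 2^{-a\theta|k|}(1+|k|)^{N(1-\theta)}$, which is summable over $k\in\mathbb{Z}$; summing $\mathcal{M}^{d\sigma}_Df\le\sum_k\mathcal{M}^{\sigma_k}_Df$ then proves the theorem for $1<p<\infty$, while the endpoint $p=\infty$ is immediate from $\|\mathcal{M}^{d\sigma}_Df\|_{L^\infty}\le|d\sigma|(G)\,\|f\|_{L^\infty}$. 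I expect the principal obstacle to be the uniform verification of the Calder\'on--Zygmund kernel estimates for $K_k$ with polynomial dependence on $k$ — in particular the $\ell^2$-summation over the scales $j$ and the regularity in the group variable, given that $d\sigma$ carries no smoothness of its own — the rest being a fairly standard Littlewood--Paley and interpolation argument.
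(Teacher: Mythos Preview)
Your proposal is correct and is essentially the paper's approach: the same Littlewood--Paley decomposition of $d\sigma$ relative to the spectrum of $\mathcal{R}$, the same $2^{-a|k|}$ decay on $L^2$ extracted from the hypothesis \eqref{FT;Condition:Measure} via almost-orthogonality, the same H\"ormander estimate with growth $O(1+|k|)$ for the Calder\'on--Zygmund step (your ``principal obstacle'' is exactly what the paper proves in its Step~2), and interpolation followed by summation in $k$. The only organisational difference is that you run the Calder\'on--Zygmund argument directly for the $\ell^2$-valued kernel $\{(\sigma_k)_{2^j}\}_j$, whereas the paper first bounds the \emph{scalar} operators $\tilde T_kf=\sum_j f*K_j*\Phi_{j+k}$ with constants uniform over all sign changes $K_j\mapsto\pm K_j$, and then passes to the square function via Khintchine's inequality; since the paper's H\"ormander bound in Step~2 is obtained termwise in $j$ (an $\ell^1$-in-$j$ estimate, hence a fortiori $\ell^2$), the two packagings reduce to the identical computation with the same constant $(1+|k|)$, so your exponent $N$ can be taken equal to~$1$. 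Two minor remarks: the pointwise size bound $\|K_k(x)\|_{\ell^2}\lesssim(1+|k|)^N|x|^{-Q}$ is not needed (and is not what the paper proves)---the H\"ormander regularity condition together with the $L^2$ bound suffices for Coifman--Weiss; and on a noncommutative $G$ the H\"ormander condition for the right-convolution $f\mapsto f*K$ should be written with left translation, $\int_{|x|\ge c|y|}\|K_k(y^{-1}x)-K_k(x)\|_{\ell^2}\,dx$, as in the paper.
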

\begin{remark}
For the $\ell^p$-boundeness of the discrete dyadic maximal function we refer the reader to Bourgain, Mirek,  Stein, and Wr\'obel \cite{Bourgain2020,Bourgain2021}. 
\end{remark}
\begin{remark}
\label{remark:sigma} Our proof of Theorem \ref{Main:Thn}  is inspired by the approach developed by Duoandikoetxea and Rubio De Francia in \cite{Duoandikoetxea:RubioDeFrancia:86}. In the setting of non-commutative nilpotent Lie groups many difficulties arise. One is that the Fourier transform of distributions is operator valued. Also, the Fourier transform condition \eqref{FT;Condition:Measure} is motivated by conditions of the same nature arising for homogeneous structures (nonisotropic structures) on the Euclidean space, namely,  the condition of non-vanishing curvature  at infinite order for Euclidean hypersurfaces $\Sigma\subset \mathbb{R}^n.$ Such a condition   implies a  decay estimate for the Fourier transform $\widehat{d\Sigma}$ of the corresponding surface measure $d\Sigma,$  see e.g. Seeger, Tao and Wright \cite{SeegerTaoWright}.
\end{remark}
\begin{remark}\label{rem:heisenberg} The criterion given in Theorem \ref{Main:Thn} is new even on the Heisenberg group. Then also on Heisenberg type groups, stratified groups, etc. Examples of Rockland operators on stratified groups are H\"ormander sub-Laplacians and their integer powers, see \cite{FischerRuzhanskyBook}.
\end{remark}
\begin{remark}Let $d\sigma$ be a finite measure of compact support on $G=\mathbb{R}^n.$  In the case where  $\mathcal{R}=-\Delta_x$ is  the positive Laplacian, the inequality in \eqref{FT;Condition:Measure}
  becomes equivalent to the Fourier transform condition, 
  \begin{equation}\label{Euclidean:cond}
   \forall\xi\neq 0,\,  |\widehat{d\sigma}(\xi)|\lesssim \min\{|\xi|^{a},|\xi|^{-a}\},\,\,a>0,
  \end{equation}  to guarantee the $L^p$-boundedness of the dyadic maximal operator $\mathcal{M}_D^{d\sigma}$ on $L^p(\mathbb{R}^n),$ for all $1<p\leq \infty,$ see Duoandikoetxea and Rubio De Francia \cite{Duoandikoetxea:RubioDeFrancia:86}, and e.g.  Theorem 6.3.4 in Grafakos \cite[Page 455]{Grafakos}. Let $d\mu$ be the surface measure on the sphere $\mathbb{S}^{n-1}.$ Coifman and Weiss \cite[Page 246]{CoifmanWeiss1978} deduced the $L^p$-boundedness of $\mathcal{M}_D^{d\mu},$ $1\leq p<\infty,$  using the fact that
  \begin{equation}\label{Euclidean:cond:2}
   \forall\xi\neq 0,\,  |\widehat{d\mu}(\xi)|\lesssim|\xi|^{-(n-1)/2},\,\,n\geq 2.
  \end{equation}   Indeed,   if $\phi\in C^\infty_0(\mathbb{R}^n)$ is such that $\widehat{\phi}(0)=1,$ for  the measure $d{\sigma}=d\mu-\widehat{\mu}(0)\phi,$  Coifman and Weiss proved that the $L^p$-boundedness of  $\mathcal{M}_D^{d\sigma},$ implies the $L^p$-boundedness of  $\mathcal{M}_D^{d\mu}.$  This latter argument due to Coifman and Weiss is an alternative proof to the one given in the classical manuscript \cite{Calderon1979} due to  C.  Calder\'on. 
  
\end{remark}
 \begin{remark}Let $d\sigma$ be a finite Borel measure    of compact support on a graded Lie group $G.$ Let $\mathcal{R}$ be a positive Rockland operator on $G$ of homogeneous degree $\nu>0.$ By the Riesz-representation theorem, we have that $d\sigma=Kdx$ for some compactly supported function $K$ on $G.$ Note that if $\mathcal{R}^{\pm \frac{a}{\nu}}K\in L^1(G),$ (this means that $K$ belongs to the Sobolev space $\dot{L}^{1}_{\pm a}(G),$ that is $\Vert K\Vert_{\dot{L}^{1}_{\pm a}(G)}:=\Vert \mathcal{R}^{\pm \frac{a}{\nu} }K\Vert_{L^{1}(G)}<\infty$) then 
    $$  \sup_{\pi\in\widehat{G}}\Vert\pi(\mathcal{R})^{\pm \frac{a}{\nu}}\widehat{d\sigma}(\pi)\Vert_{\textnormal{op}}= \sup_{\pi\in\widehat{G}}\Vert\smallint_{G}\mathcal{R}^{\pm \frac{a}{\nu} }K(g)\pi(g)^{*}dg\Vert_{\textnormal{op}}\leq  \Vert K\Vert_{\dot{L}^{1}_{\pm a}(G)} <\infty, $$ showing that the class of compactly supported finite Borel measures $d\sigma=Kdx$ with $K\in \dot{L}^{1}_{a}(G)\cap \dot{L}^{1}_{-a}(G) , $ for some $a>0,$ provides examples of  measures satisfying \eqref{FT;Condition:Measure}. 
\end{remark}
\begin{remark}
    We note that the aforementioned result by C. P. Calder\'on \cite{Calderon1979} about the $L^p$-boundedness of \eqref{Dyadic:rn} can be substantially extended in the Euclidean setting. Indeed, as in Remark \ref{remark:sigma}, let   $\Sigma\subset \mathbb{R}^n$  be a compact hyper-surface. If the Gaussian curvature does not vanish to infinite order on  $\Sigma,$ then one has the decay estimate  $|\widehat{d\Sigma}(\xi)|\lesssim (1+|\xi|)^{-\gamma},$  for some $\gamma>0.$ This is a consequence of van der Corput’s lemma, see e.g. Seeger, Tao, and Wright \cite[Page 622]{SeegerTaoWright}. However, due to the generality of the setting of Theorem  \ref{Main:Thn}, it is unknown to us if a similar argument could prove the necessity of \eqref{FT;Condition:Measure} on arbitrary graded Lie groups.
\end{remark}

\begin{remark} Now we continue with Remark \ref{rem:heisenberg}. In the Heisenberg group $\mathbb{H}^n$, our results are new in the sense that the criterion in \eqref{FT;Condition:Measure:Intro} allows the use of general Rockland operators $\mathcal{R}$ of higher homogeneous orders.  The sub-Laplacian $\mathcal{L}=-\sum_{j=1}^n(X_j^2+Y_j^2)$ is a Rockland operator of homogeneous order $\nu=2.$ One can consider for instance the positive Rockland operator $\mathcal{R}=\sum_{j=1}^n(X_j^4+Y_j^4).$ Here $\{X_j,Y_j,Z=\partial_{t}:1\leq j\leq n\}$ denotes the standard basis of left-invariant vector fields of the Lie algebra $\mathfrak{h}_n=\textnormal{Lie}(\mathbb{H}^n),$ see e.g. \cite[Chapter 6]{FischerRuzhanskyBook} for details.   It would be interesting to analyse the relation between the condition in \eqref{FT;Condition:Measure} and other results about the $L^p$-boundedness of $\mathcal{M}^{d\sigma}_D$ on the Heisenberg group, see  \cite{BagchiHaitRoncalThangavelu2018}, \cite{BeltranGuoHickmanSeeger}, \cite{Cowling79}, \cite{Schmidt}, \cite{GangulyThangavelu2021}, \cite{NevoThangavelu},  \cite{NarayananThangavelu2004},  and \cite{RoosSeegerSrivastava22}. We also refer the reader to the recent work \cite{GovidanAswinHickman}  where a {\it curvature assumption} was analysed to guarantee the $L^p$-boundedness of the dyadic maximal function on homogeneous Lie groups. 
\end{remark}
\begin{remark}Although the family of graded Lie groups is a sub-class of the family of homogeneous Lie groups (and every homogeneous Lie group is a nilpotent Lie group), every homogeneous Lie group admitting the existence of positive hypoelliptic partial differential operators (also called Rockland operators) is necessarily a graded Lie group in view of the Helffer and Nourrigat solution of the Rockland conjecture, see \cite{HelfferNourrigat}.  So, it is an open problem to determine a relation between the {\it curvature assumption} in \cite{GovidanAswinHickman} and the group Fourier transform decay property in \eqref{FT;Condition:Measure} when $G$ is a graded Lie group, or even when the group $G$ is stratified and the Rockland operator $\mathcal{R}$ is considered to be the sub-Laplacian.
\end{remark}

\begin{remark} We observe that even in the Euclidean case, the problem of determining the weak (1,1) boundedness of the dyadic spherical means is open. Some results in this direction have been proved by Christ and Stein\cite{ChristStein87},  Christ \cite{Christ88},   Seeger, Tao, and  Wright \cite{STW}, and Cladek and Krause \cite{CladekKrause}. In consequence,  it
is a very interesting open problem to determine whether  \eqref{FT;Condition:Measure} is a sufficient condition to guarantee the weak (1,1) boundedness of   $\mathcal{M}^{d\sigma}_D$. 
\end{remark}

\section{Fourier analysis on graded groups}\label{preliminaries}

For the aspects of the Fourier analysis on nilpotent Lie groups we follow Folland and Stein \cite{FollandStein1982} and the notation is taken from \cite{FischerRuzhanskyBook}. For the aspects about the theory of Rockland operators on graded Lie groups we follow \cite{FischerRuzhanskyBook}.

\subsection{Homogeneous and graded Lie groups} 
    Let $G$ be a homogeneous Lie group, that is a connected and simply connected Lie group whose Lie algebra $\mathfrak{g}$ is endowed with a family of dilations $D_{r,\mathfrak{g}}.$ We define it  as follows.
    \begin{definition} A family of dilations $  \textnormal{Dil}(\mathfrak{g}):=\{D_{r,\mathfrak{g}}:\,r>0\}$  on the Lie algebra $\mathfrak{g}$ is a family of  automorphisms on $\mathfrak{g}$  satisfying the following two compatibility conditions:
\begin{itemize}
\item[1.] For every $r>0,$ $D_{r,\mathfrak{g}}$ is a map of the form
$ D_{r,\mathfrak{g}}=\textnormal{Exp}(\ln(r)A), $ 
for some diagonalisable linear operator $A\equiv \textnormal{diag}[\nu_1,\cdots,\nu_n]:\mathfrak{g}\rightarrow \mathfrak{g}.$  
\item[2.] $\forall X,Y\in \mathfrak{g}, $ and $r>0,$ $[D_{r,\mathfrak{g}}X, D_{r,\mathfrak{g}}Y]=D_{r,\mathfrak{g}}[X,Y].$ 
\end{itemize}
\end{definition}
\begin{remark}
We call  the eigenvalues of $A,$ $\nu_1,\nu_2,\cdots,\nu_n,$ the dilations weights or weights of $G$. 
\end{remark} 
In our analysis  the notion of the homogeneous dimension of the group is crucial. We introduce it as follows.
\begin{definition}
The homogeneous dimension of a homogeneous Lie group $G$ whose dilations are defined via $ D_{r,\mathfrak{g}}=\textnormal{Exp}(\ln(r)A),$ is given by  $  Q=\textnormal{Tr}(A)=\nu_1+\cdots+\nu_n,  $  where $\nu_{i},$ $i=1,2,\cdots, n,$ are the eigenvalues of $A.$
\end{definition}
In terms of the weights of the group $G$ we define for every $\alpha\in \mathbb{N}_0^n,$ the function
\begin{equation}\label{weighted:lenght}
    [\alpha]:=\sum_{j=1}^n\nu_j\alpha_j.
\end{equation}
\begin{definition}[Dilations on the group]\label{dilations:group}
The family of dilations $\textnormal{Dil}(\mathfrak{g})$ of the Lie algebra $\mathfrak{g}$ induces a family of  mappings on $G$ defined via,
$$\textnormal{Dil}(G):=\{ D_{r}:=\exp_{G}\circ D_{r,\mathfrak{g}} \circ \exp_{G}^{-1}:\,\, r>0\}, $$
where $\exp_{G}:\mathfrak{g}\rightarrow G$ is the usual exponential mapping associated to the Lie group $G.$ We refer to the elements of the family $\textnormal{Dil}(G)$ as dilations on the group.
\end{definition} 
 \begin{remark}
 If we use the notation $r\cdot x=D_{r}(x),$ $x\in G,$ $r>0,$ then the effect of the dilations of the group  on the Haar measure $dx$ on $G$ is determined by the identity $$ \smallint\limits_{G}(f\circ D_{r})(x)dx=r^{-Q}\smallint\limits_{G}f(x)dx. $$
 \end{remark}
 \begin{definition}\label{graded:Lie:groups}
A connected, simply connected Lie group $G$ is graded if its Lie algebra $\mathfrak{g}$ may be decomposed as the direct sum of subspaces $  \mathfrak{g}=\mathfrak{g}_{1}\oplus\mathfrak{g}_{2}\oplus \cdots \oplus \mathfrak{g}_{s}$  such that the following bracket conditions are satisfied: 
$[\mathfrak{g}_{i},\mathfrak{g}_{j} ]\subset \mathfrak{g}_{i+j},$ where  $ \mathfrak{g}_{i+j}=\{0\}$ if $i+j\geq s+1,$ for some $s.$     
 \end{definition}
  Examples of graded Lie groups are the Heisenberg group $\mathbb{H}^n$ and more generally any stratified group where the Lie algebra $ \mathfrak{g}$ is generated by the first stratum $\mathfrak{g}_{1}$.  Here, $n$ is the topological dimension of $G,$ $n=n_{1}+\cdots +n_{s},$ where $n_{k}=\mbox{dim}\mathfrak{g}_{k}.$ For more examples, see \cite{FischerRuzhanskyBook}.
\begin{remark}[Not every
nilpotent Lie group is homogeneous]
A Lie algebra admitting a family of dilations is nilpotent, and hence so is its associated
connected, simply connected Lie group. The converse does not hold, i.e., not every
nilpotent Lie group is homogeneous  although they exhaust a large class, see \cite{FischerRuzhanskyBook} for details. Indeed, the main class of Lie groups under our consideration is that of graded Lie groups. \end{remark}

\subsection{Fourier analysis on nilpotent Lie groups}

Let $G$ be a simply connected nilpotent Lie group. Then the adjoint representation $\textnormal{ad}:\mathfrak{g}\rightarrow\textnormal{End}(\mathfrak{g})$ is nilpotent. Next, we define unitary and irreducible representations.
\begin{definition}[Continuous, unitary and irreducible representations of $G$]We say that $\pi$ is a continuous, unitary and irreducible  representation of $G,$ if the following properties are satisfied,
\begin{itemize}
    \item[1.] $\pi\in \textnormal{Hom}(G, \textnormal{U}(H_{\pi})),$ for some separable Hilbert space $H_\pi,$ i.e. $\pi(xy)=\pi(x)\pi(y)$ and for the  adjoint of $\pi(x),$ $\pi(x)^*=\pi(x^{-1}),$ for every $x,y\in G.$ This property says that the representation is compatible with the group operation.
    \item[2.] The map $(x,v)\mapsto \pi(x)v, $ from $G\times H_\pi$ into $H_\pi$ is continuous. This says that the representation is a strongly continuous mapping.
    \item[3.] For every $x\in G,$ and $W_\pi\subset H_\pi,$ if $\pi(x)W_{\pi}\subset W_{\pi},$ then $W_\pi=H_\pi$ or $W_\pi=\{0\}.$ This means that  the representation $\pi$ is irreducible if its only invariant subspaces are $W=\{0\}$ and $W=H_\pi,$ the trivial ones. 
\end{itemize}
\end{definition}
\begin{definition}[Equivalent representations]
    Two unitary representations $$ \pi\in \textnormal{Hom}(G,\textnormal{U}(H_\pi)) \textnormal{ and  }\eta\in \textnormal{Hom}(G,\textnormal{U}(H_\eta))$$  are equivalent if there exists a bounded linear mapping $Z:H_\pi\rightarrow H_\eta$ such that for any $x\in G,$ $Z\pi(x)=\eta(x)Z.$ The mapping $Z$ is called an intertwining operator between $\pi$ and $\eta.$ The set of all the intertwining operators between $\pi$ and $\eta$ is denoted by $\textnormal{Hom}(\pi,\eta).$
\end{definition}
\begin{definition}[The unitary dual]
    The relation $\sim$ on the set of unitary and irreducible representations $\textnormal{Rep}(G)$ defined by: {\it $\pi\sim \eta$ if and only if $\pi$ and $\eta$ are equivalent representations,} is an equivalence relation. The quotient 
$$
    \widehat{G}:={\textnormal{Rep}(G)}/{\sim}
$$is called the unitary dual of $G.$
\end{definition}
The unitary dual encodes all the Fourier analysis on the group. So, we are going to define the Fourier transform.
\begin{definition}[Group Fourier Transform]
The Fourier transform of $f\in L^1(G), $ at $\pi\in\widehat{G},$ is defined by 
\begin{equation*}
    \widehat{f}(\pi)=\smallint\limits_{G}f(x)\pi(x)^*dx:H_\pi\rightarrow H_\pi.
\end{equation*}
\begin{remark}
    The Schwartz space $\mathscr{S}(G)$ is defined by the smooth functions $f:G\rightarrow\mathbb{C},$ such that via the exponential mapping $f\circ \exp_{G}:\mathfrak{g}\cong \mathbb{R}^n\rightarrow \mathbb{C}$ can be identified with functions on the Schwartz class $\mathscr{S}(\mathbb{R}^n).$ Then, the Schwartz space on the dual $\widehat{G}$ is defined by the image under the Fourier transform of the Schwartz space $\mathscr{S}(G),$ that is  $\mathscr{F}_{G}:\mathscr{S}(G)\rightarrow \mathscr{S}(\widehat{G}):=\mathscr{F}_{G}(\mathscr{S}(G)).$ 
\end{remark}
\end{definition}
\begin{remark}[Fourier Inversion Formula and Plancherel Theorem]
If we identify one representation $\pi$ with its equivalence class, $[\pi]=\{\pi':\pi\sim \pi'\}$,  for every $\pi\in \widehat{G}, $ the Kirillov trace character $\Theta_\pi$ defined by  $$[\Theta_{\pi},f]:
=\textnormal{Tr}(\widehat{f}(\pi)),$$ is a tempered distribution on $\mathscr{S}(G).$ The Kirillov character allows one to write the Fourier inversion formula
\begin{equation*}
  \forall f\in L^1(G)\cap L^2(G),\,\, f(x)=\smallint\limits_{\widehat{G}}\textnormal{Tr}[\pi(x)\widehat{f}(\pi)]d\pi,\,\,x\in G.
\end{equation*}
The $L^2$-space on the dual is defined by the completion of $\mathscr{S}(G)$ with respect to the norm
\begin{equation}
    \Vert \sigma\Vert_{L^2(\widehat{G})}:=\left(\smallint\limits_{\widehat{G}}\|\sigma(\pi)\|_{\textnormal{HS}}^2d\pi \right)^{\frac{1}{2}},\,\,\sigma(\pi)=\widehat{f}(\pi)\in \mathscr{S}(\widehat{G}),
\end{equation}where $\|\cdot\|_{\textnormal{HS}}$ denotes the Hilbert-Schmidt norm of  operators on every representation space. The corresponding inner product on $L^2(\widehat{G})$ is given by
\begin{equation}
    (\sigma,\tau)_{L^2(\widehat{G})}:=\smallint\limits_{G}\textnormal{Tr}[\sigma(\pi)\tau(\pi)^{*}]d\pi,\,\,\sigma,\tau\in L^2(\widehat{G}),
\end{equation}where the notation $\tau(\pi)^{*}$ indicates the adjoint operator. Then,
the Plancherel theorem says that $\Vert f\Vert_{L^2(G)}=\Vert \widehat{f}\Vert_{L^2(\widehat{G})}$ for all $f\in L^2(G).$
\end{remark}

\subsection{Homogeneous linear operators and Rockland operators} Homogeneous operators interact with the dilations of the group. We introduce them in the following definition.
\begin{definition}[Homogeneous operators]
A continuous linear operator $T:C^\infty(G)\rightarrow \mathscr{D}'(G)$ is homogeneous of  degree $\nu_T\in \mathbb{C}$ if for every $r>0$ the equality 
\begin{equation*}
T(f\circ D_{r})=r^{\nu_T}(Tf)\circ D_{r}
\end{equation*}
holds for every $f\in \mathscr{D}(G). $
\end{definition}
Now, using the notation in \eqref{weighted:lenght} we introduce the main class of differential operators in the context of nilpotent Lie groups. The existence of these operators classifies the family of graded Lie groups. We call them Rockland operators.
\begin{definition}[Rockland operators]
If for every representation $\pi\in\widehat{G},$ $\pi:G\rightarrow U({H}_{\pi}),$ we denote by ${H}_{\pi}^{\infty}$ the set of smooth vectors (also called G\r{a}rding vectors), that is, the space of vectors $v\in {H}_{\pi}$ such that the function $x\mapsto \pi(x)v,$ $x\in \widehat{G},$ is smooth,  a Rockland operator is a left-invariant partial  differential operator $$ \mathcal{R}=\sum_{[\alpha]=\nu}a_{\alpha}X^{\alpha}:C^\infty(G)\rightarrow C^{\infty}(G)$$  which is homogeneous of positive degree $\nu=\nu_{\mathcal{R}}$ and such that, for every unitary irreducible non-trivial representation $\pi\in \widehat{G},$ its symbol $\pi(\mathcal{R})$ defined via the Fourier inversion formula by
\begin{equation}\label{Symbol:R}
   \mathcal{R}f(x)= \smallint\limits_{\widehat{G}}\textnormal{Tr}[\pi(x)\pi(\mathcal{R})\widehat{f}(\pi)]d\pi,\,\,x\in G,
\end{equation}
is injective on ${H}_{\pi}^{\infty};$ $\sigma_{\mathcal{R}}(\pi)=\pi(\mathcal{R})$ coincides with the infinitesimal representation of $\mathcal{R}$ as an element of the universal enveloping algebra $\mathfrak{U}(\mathfrak{g})$. 
\end{definition}

\begin{example}
Let $G$ be a graded Lie group of topological dimension $n.$ We denote by $\{D_{r}\}_{r>0}$ the natural family of dilations of its Lie algebra $\mathfrak{g}:=\textnormal{Lie}(G),$ and by $\nu_1,\cdots,\nu_n$ its weights.  We fix a basis $Y=\{X_1,\cdots, X_{n}\}$ of  $\mathfrak{g}$ satisfying $D_{r}X_j=r^{\nu_j}X_{j},$ for $1\leq j\leq n,$ and all $r>0.$ If $\nu_{\circ}$ is any common multiple of $\nu_1,\cdots,\nu_n,$ the  operator 
$$ \mathcal{R}=\sum_{j=1}^{n}(-1)^{\frac{\nu_{\circ}}{\nu_j}}c_jX_{j}^{\frac{2\nu_{\circ}}{\nu_j}},\,\,c_j>0, $$ is a positive Rockland operator of homogeneous degree $2\nu_{\circ }$ on $G$ (see Lemma 4.1.8 of \cite{FischerRuzhanskyBook}).    
\end{example}
\begin{remark}
It can be shown that a Lie group $G$ is graded if and only if there exists a differential Rockland operator on $G.$ Also, in view of the Schwartz kernel theorem, the operator $\mathcal{R}$ admits a right-convolution kernel $k_{\mathcal{R}},$ that is for any $f\in C^\infty_0(G),$
\begin{equation*}
    \mathcal{R}f(x)=\smallint\limits_{G}f(y)k_{\mathcal{R}}(y^{-1}x)dy,\,\,x\in G.
\end{equation*}In terms of the convolution of functions
\begin{equation}
    f\ast g(x):=\smallint\limits_{G}f(y)g(y^{-1}x)dy,\,\,f,g\in L^1(G),
\end{equation}and in view of the action of the Fourier transform on convolutions $  \widehat{f\ast g}=\widehat{g}\widehat{f},$ the Fourier inversion formula shows that 
$ 
    \forall \pi \in \widehat{G},\,\pi(\mathcal{R})=\widehat{k}_{\mathcal{R}}(\pi).
$
\end{remark}
Next, we record for our further analysis some aspects of the functional calculus for Rockland operators.
\begin{remark}[Functional calculus for Rockland operators] If the Rockland operator is formally self-adjoint, then $\mathcal{R}$ and $\pi(\mathcal{R})$ admit self-adjoint extensions on $L^{2}(G)$ and ${H}_{\pi},$ respectively.
Now if we preserve the same notation for their self-adjoint
extensions and we denote by $E$ and $E_{\pi}$  their spectral measures, we will denote by
\begin{equation}\label{Functional:identities:spectral:calculus}
    \psi(\mathcal{R})=\smallint\limits_{-\infty}^{\infty}\psi(\lambda) dE(\lambda),\,\,\,\textnormal{and}\,\,\,\pi(\psi(\mathcal{R}))\equiv \psi(\pi(\mathcal{R}))=\smallint\limits_{-\infty}^{\infty}\psi(\lambda) dE_{\pi}(\lambda),
\end{equation}
the functions defined by the functional calculus. 
In general, we will reserve the notation $\{dE_A(\lambda)\}_{0\leq\lambda<\infty}$ for the spectral measure associated with a positive and self-adjoint operator $A$ on a Hilbert space $H.$ 
\end{remark}
We now recall a lemma on dilations on the unitary dual $\widehat{G},$ which will be useful in our analysis of spectral multipliers.   For the proof, see Lemma 4.3 of \cite{FischerRuzhanskyBook}.
\begin{lemma}\label{dilationsrepre}
For every $\pi\in \widehat{G}$ let us define  
\begin{equation}\label{dilations:repre}
  D_{r}(\pi)(x)\equiv (r\cdot \pi)(x):=\pi(r\cdot x)\equiv \pi(D_r(x)),  
\end{equation}
 for every $r>0$ and all $x\in G.$ Then, if $f\in L^{\infty}(\mathbb{R})$ then $f(\pi^{(r)}(\mathcal{R}))=f({r^{\nu}\pi(\mathcal{R})}).$
\end{lemma}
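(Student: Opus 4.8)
The plan is to reduce the stated spectral identity to the purely algebraic identity $\pi^{(r)}(\mathcal{R})=r^{\nu}\pi(\mathcal{R})$ at the level of operators on the smooth vectors $H_\pi^{\infty}$, after which the functional calculus does the rest. The starting observation is that $D_r$ is a group automorphism, so $\pi^{(r)}:=D_r(\pi)$ is again a continuous, unitary, irreducible representation; in particular its infinitesimal representation $d\pi^{(r)}$ is well defined on $H_\pi^{\infty}$, and $\pi^{(r)}(\mathcal{R})=d\pi^{(r)}(\mathcal{R})$ by the very definition of the symbol of $\mathcal{R}$.

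First I would compute $d\pi^{(r)}$ on the Lie algebra. For $X\in\mathfrak{g}$ and a smooth vector $v$, differentiating $t\mapsto \pi^{(r)}(\exp_{G}(tX))v=\pi\big(D_r(\exp_{G}(tX))\big)v$ at $t=0$ and using the intertwining property $D_r(\exp_{G}(tX))=\exp_{G}(t\,D_{r,\mathfrak{g}}X)$ of the exponential map with the dilations, one gets $d\pi^{(r)}(X)=d\pi(D_{r,\mathfrak{g}}X)$. Since $d\pi$ and $d\pi^{(r)}$ are algebra homomorphisms of the universal enveloping algebra $\mathfrak{U}(\mathfrak{g})$ into $\textnormal{End}(H_\pi^{\infty})$ that agree with $d\pi\circ D_{r,\mathfrak{g}}$ on the generators $\mathfrak{g}$, they agree on all of $\mathfrak{U}(\mathfrak{g})$, where $D_{r,\mathfrak{g}}$ is extended to an algebra automorphism of $\mathfrak{U}(\mathfrak{g})$.

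Next I would invoke the homogeneity of $\mathcal{R}$. Writing $\mathcal{R}=\sum_{[\alpha]=\nu}a_\alpha X^\alpha$ in a dilation-adapted basis with $D_{r,\mathfrak{g}}X_j=r^{\nu_j}X_j$, the extended automorphism acts on each monomial by $D_{r,\mathfrak{g}}(X^\alpha)=r^{[\alpha]}X^\alpha=r^{\nu}X^\alpha$, precisely because every term satisfies $[\alpha]=\nu$. Hence $D_{r,\mathfrak{g}}(\mathcal{R})=r^\nu\mathcal{R}$ in $\mathfrak{U}(\mathfrak{g})$, and combining with the previous step gives $\pi^{(r)}(\mathcal{R})=d\pi(D_{r,\mathfrak{g}}\mathcal{R})=r^\nu\,d\pi(\mathcal{R})=r^\nu\pi(\mathcal{R})$ on $H_\pi^{\infty}$.

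Finally I would pass from smooth vectors to the functional calculus. The operators $\pi^{(r)}(\mathcal{R})$ and $r^\nu\pi(\mathcal{R})$ agree on $H_\pi^{\infty}$, which is a common core on which each is essentially self-adjoint (a standard fact from Rockland theory, tacitly used in the functional calculus for $\pi(\mathcal{R})$); therefore their self-adjoint extensions, and hence their spectral measures, coincide. Since $f\mapsto f(A)$ depends on the self-adjoint operator $A$ only through its spectral measure, the equality of the operators forces $f(\pi^{(r)}(\mathcal{R}))=f(r^\nu\pi(\mathcal{R}))$ for every $f\in L^\infty(\mathbb{R})$, which is the assertion. The one point I expect to require genuine care is this last identification of the self-adjoint extensions from agreement on $H_\pi^{\infty}$; the algebraic heart of the argument, by contrast, is entirely formal once the homogeneity condition $[\alpha]=\nu$ is exploited.
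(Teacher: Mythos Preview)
Your argument is correct: the reduction to the operator identity $\pi^{(r)}(\mathcal{R})=r^{\nu}\pi(\mathcal{R})$ on $H_\pi^{\infty}$ via $d\pi^{(r)}=d\pi\circ D_{r,\mathfrak{g}}$ and the homogeneity $D_{r,\mathfrak{g}}(\mathcal{R})=r^{\nu}\mathcal{R}$ in $\mathfrak{U}(\mathfrak{g})$, followed by the passage to self-adjoint extensions through the core $H_\pi^{\infty}$, is exactly the standard proof.

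There is nothing to compare against, however: the paper does not supply its own proof of this lemma but simply cites Lemma~4.3 of the Fischer--Ruzhansky monograph. Your write-up is essentially that argument spelled out, and the point you flag as ``requiring genuine care'' (that $H_\pi^{\infty}$ is a core for the essentially self-adjoint operator $\pi(\mathcal{R})$, hence also for $\pi^{(r)}(\mathcal{R})$ since the smooth vectors for $\pi$ and $\pi^{(r)}$ coincide) is precisely the nontrivial input from Rockland theory that the cited reference provides.
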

\begin{remark}
Note that if $f_{r}:=r^{-Q}f(r^{-1}\cdot),$ then 
\begin{equation}\label{Eq:dilatedFourier}
    \widehat{f}_{r}(\pi)=\smallint\limits_{G}r^{-Q}f(r^{-1}\cdot x)\pi(x)^*dx=\smallint\limits_{G}f(y)\pi(r\cdot y)^{*}dy=\widehat{f}(r\cdot \pi),
\end{equation}for any $\pi\in \widehat{G}$ and all $r>0,$ with $(r\cdot \pi)(y)=\pi(r\cdot y),$ $y\in G,$ as in \eqref{dilations:repre}.
\end{remark}

The following lemma presents the action of the dilations of the group $G$ into the kernels of bounded functions of a Rockland operator $\mathcal{R},$ see \cite[Page 179]{FischerRuzhanskyBook}.
\begin{lemma}\label{Fundamental:lemmaCZ:graded}
Let $f\in L^{\infty}(\mathbb{R}^{+}_0)$ be a bounded Borel function and let $r>0.$ Then, we have
\begin{equation}
 \forall x\in G,\,   f(r^{\nu}\mathcal{R})\delta(x)=r^{-Q}[f(\mathcal{R})\delta](r^{-1}\cdot x),
\end{equation}where $Q$ is the homogeneous dimension of $G.$
\end{lemma}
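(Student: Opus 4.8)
The plan is to deduce the identity by comparing the group Fourier transforms of the two sides, using exactly the two dilation facts recorded just above: Lemma \ref{dilationsrepre} and the remark containing \eqref{Eq:dilatedFourier}. First I would recall the correspondence between spectral multipliers of $\mathcal{R}$ and their right-convolution kernels: for a bounded Borel function $\psi$ on $[0,\infty)$ the operator $\psi(\mathcal{R})$ acts by right convolution against the tempered distribution $\psi(\mathcal{R})\delta$, and at the level of the Plancherel decomposition one has $\widehat{\psi(\mathcal{R})\delta}(\pi)=\psi(\pi(\mathcal{R}))$ for (almost) every $\pi\in\widehat{G}$. This is the multiplier version of the identity $\pi(\mathcal{R})=\widehat{k}_{\mathcal{R}}(\pi)$ from the remark on Rockland operators, combined with the convolution rule $\widehat{f\ast g}=\widehat{g}\widehat{f}$ and the functional calculus \eqref{Functional:identities:spectral:calculus}; since $\pi\mapsto\psi(\pi(\mathcal{R}))$ is a measurable field of operators bounded by $\|\psi\|_{L^\infty}$, the distribution $\psi(\mathcal{R})\delta$ is well defined.

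Next, set $g:=f(\mathcal{R})\delta$ and, with the measure-normalized dilation from the remark after Lemma \ref{dilationsrepre}, put $g_{r}:=r^{-Q}g(r^{-1}\cdot)$. By \eqref{Eq:dilatedFourier} we get $\widehat{g_{r}}(\pi)=\widehat{g}(r\cdot\pi)=f\big((r\cdot\pi)(\mathcal{R})\big)$, and Lemma \ref{dilationsrepre}, applied to the bounded Borel function $f$ (extended by $0$ to $(-\infty,0)$ if needed), rewrites this as $f\big(r^{\nu}\pi(\mathcal{R})\big)$. On the other hand, applying the correspondence of the previous paragraph to $\psi(\lambda):=f(r^{\nu}\lambda)$ shows that $\widehat{f(r^{\nu}\mathcal{R})\delta}(\pi)=f\big(r^{\nu}\pi(\mathcal{R})\big)$ as well. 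Hence $\widehat{g_{r}}(\pi)=\widehat{f(r^{\nu}\mathcal{R})\delta}(\pi)$ for almost every $\pi$, and by injectivity of the group Fourier transform on tempered distributions we conclude $g_{r}=f(r^{\nu}\mathcal{R})\delta$; unwinding the definition of $g_{r}$ is exactly the asserted identity $f(r^{\nu}\mathcal{R})\delta(x)=r^{-Q}[f(\mathcal{R})\delta](r^{-1}\cdot x)$.

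An equivalent, Fourier-free route uses homogeneity directly: the unitary dilation $U_{r}$ on $L^{2}(G)$, $U_{r}\phi:=r^{Q/2}\phi(r\cdot)$, satisfies $U_{r}^{-1}\mathcal{R}U_{r}=r^{\nu}\mathcal{R}$ because $\mathcal{R}$ is homogeneous of degree $\nu$, so by the spectral theorem $f(r^{\nu}\mathcal{R})=U_{r}^{-1}f(\mathcal{R})U_{r}$; evaluating both sides on $\delta$ and using that the group dilations fix the identity element, so that $U_{r}\delta=r^{-Q/2}\delta$, gives the same conclusion after recombining the powers of $r$.

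In either route the only point requiring care is the bookkeeping at the level of $\mathscr{S}'(G)$ rather than functions --- that $f(\mathcal{R})\delta$ is a genuine tempered distribution for merely bounded Borel $f$, that its Fourier transform is the operator field $f(\pi(\mathcal{R}))$, that the dilations and the intertwiner $U_{r}$ extend to $\mathscr{S}'(G)$, and that $\mathscr{F}_{G}$ is injective there. This I would handle by first proving the identity for $f$ in the Schwartz class, where $f(\mathcal{R})\delta\in\mathscr{S}(G)$ and every step above is literal (see \cite{FischerRuzhanskyBook}), and then extending to general bounded Borel $f$ by testing both sides against $\phi\in\mathscr{S}(G)$ and passing to the limit, using the uniform bound $\|f(\pi(\mathcal{R}))\|_{\textnormal{op}}\leq\|f\|_{L^{\infty}}$ together with dominated convergence on $\widehat{G}$. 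This limiting argument is the main (though mild) obstacle; the algebraic core of the proof is the one-line combination of \eqref{Eq:dilatedFourier} with Lemma \ref{dilationsrepre}.
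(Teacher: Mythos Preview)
Your proof is correct; both the Fourier-transform route combining \eqref{Eq:dilatedFourier} with Lemma \ref{dilationsrepre} and the direct homogeneity argument via the unitary dilation $U_r$ are valid and standard. Note that the paper does not actually supply a proof of this lemma: it simply cites \cite[Page 179]{FischerRuzhanskyBook}, so there is no in-paper argument to compare against.
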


\section{Boundedness of the dyadic maximal function} 

In this section we establish the $L^p$-boundedness of the dyadic maximal function associated to a compactly supported Borel measure on a graded Lie group satisfying some additional Fourier transform conditions according to the hypothesis in Theorem \ref{Main:Thn}. First, we start with our main lemma  in the next subsection.
\subsection{The key lemma}
The following Lemma \ref{Lemma:main:dyadic} is our main tool for the proof of Theorem \ref{Main:Thn}.  Indeed, it will be used to use the $L^p$-boundedness of the square function operator in Lemma \ref{Rademacher:lemma} from which we will deduce the boundedness of the dyadic maximal function \eqref{Maximal:Function:Graded}.

\begin{lemma}\label{Lemma:main:dyadic}
Let $K\in L^1(G)$ be a distribution with compact support such that for some $a>0$ the group Fourier transform of $K$ satisfies the growth estimate
\begin{equation}\label{Fourier:order}
  \max_{\pm } \sup_{\pi\in\widehat{G}}\Vert\pi(\mathcal{R})^{\pm \frac{a}{\nu}}\widehat{K}(\pi)\Vert_{\textnormal{op}}<\infty.
\end{equation}
For any $j\in\mathbb{Z},$ let us consider the kernel $K_j(x)=2^{-j Q}K(2^{-j}\cdot x),$ $x\in G,$ and define $ T$ as follows
\begin{equation}
    Tf(x):=\sum_{j=-\infty}^\infty f\ast K_j(x),\,f\in C^\infty_0(G).
\end{equation}Then, $T: L^p(G)\rightarrow L^p(G)$ admits a bounded extension  for all $1<p<\infty.$
\end{lemma}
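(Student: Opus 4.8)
The plan is to show that $T$ is a Calderón–Zygmund operator on the homogeneous (space of homogeneous type) group $G$, so that boundedness on $L^2(G)$ together with a Hörmander-type kernel estimate yields boundedness on $L^p(G)$ for $1<p<\infty$ by the Coifman–Weiss theory. There are therefore two things to establish: (i) the $L^2$-bound, and (ii) the kernel regularity. For (i), observe that $T$ is a left-invariant operator, hence a Fourier multiplier, with symbol $\widehat{T}(\pi)=\sum_{j\in\mathbb Z}\widehat{K_j}(\pi)=\sum_{j\in\mathbb Z}\widehat{K}(2^{-j}\cdot\pi)$ by \eqref{Eq:dilatedFourier}. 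Using Lemma \ref{dilationsrepre}, each dilated symbol is controlled by the spectral calculus of $2^{-j\nu}\pi(\mathcal R)$. The hypothesis \eqref{Fourier:order} gives $\|\widehat{K}(\pi)\|_{\textnormal{op}}\lesssim \min\{\|\pi(\mathcal R)\|^{a/\nu},\|\pi(\mathcal R)^{-1}\|^{a/\nu}\}$ in the sense of the functional calculus, i.e. $\|\widehat{K}(2^{-j}\cdot\pi)v\|\lesssim \min\{(2^{-j\nu}\lambda)^{a/\nu},(2^{-j\nu}\lambda)^{-a/\nu}\}$ on the spectral subspace of $\pi(\mathcal R)$ at height $\lambda$. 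Summing the geometric series $\sum_{j}\min\{(2^{-j\nu}\lambda)^{a/\nu},(2^{-j\nu}\lambda)^{-a/\nu}\}$ over $j\in\mathbb Z$ gives a bound uniform in $\lambda>0$ and hence uniform in $\pi\in\widehat G$; by the Plancherel theorem this yields $\|Tf\|_{L^2(G)}\lesssim\|f\|_{L^2(G)}$.

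For (ii), I would write the convolution kernel of $T$ as $\mathcal K=\sum_{j\in\mathbb Z}K_j$ and verify the Hörmander integral condition $\sup_{y}\int_{|x|\ge c|y|}|\mathcal K(y^{-1}x)-\mathcal K(x)|\,dx<\infty$ with respect to a fixed homogeneous quasi-norm $|\cdot|$ on $G$. Since $K$ has compact support, say in the quasi-ball of radius $R$, the kernel $K_j$ is supported in the quasi-ball of radius $2^jR$; this localisation is what makes the sum over $j$ manageable. For the cancellation we must exploit \eqref{Fourier:order}: the condition $\sup_\pi\|\pi(\mathcal R)^{a/\nu}\widehat K(\pi)\|_{\textnormal{op}}<\infty$ says precisely that $\mathcal R^{-a/\nu}$ applied to $K$ — equivalently $K$ itself viewed through the negative-order Sobolev smoothing — has controlled regularity, which after rescaling translates into the mean-value / difference estimates for $K_j$ that one needs near the diagonal. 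The standard route is: split the sum at the scale $2^j\sim|y|$; for the "far" scales ($2^j\gg|y|$) use the smoothness/difference estimate coming from the positive-order control $\pi(\mathcal R)^{a/\nu}\widehat K(\pi)\in L^\infty$, which gives decay in $|y|/2^j$; for the "near" scales ($2^j\lesssim|y|$) use the support condition together with the negative-order control $\pi(\mathcal R)^{-a/\nu}\widehat K(\pi)\in L^\infty$, which bounds the size of $K_j$, and the fact that the region $|x|\ge c|y|$ only meets finitely many such supports (or the tails sum geometrically). Lemma \ref{Fundamental:lemmaCZ:graded} is the exact tool that converts the functional-calculus bounds on $K=k$ and its difference quotients under dilation into the required scale-by-scale estimates.

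The main obstacle I anticipate is step (ii), specifically extracting the pointwise kernel difference estimates from the \emph{operator-norm} Fourier-side hypothesis \eqref{Fourier:order}. In the Euclidean setting (as in Duoandikoetxea–Rubio de Francia) one has $|\widehat{K}(\xi)|\lesssim\min\{|\xi|^a,|\xi|^{-a}\}$ and can integrate by parts / use the Fourier inversion with the explicit group structure; here the Fourier transform is operator-valued and one cannot directly write $\mathcal R^{-a/\nu}K$ as an $L^1$ function without extra hypotheses. The plan to get around this is to not aim for $L^1$ control but only for the Hörmander condition: one decomposes $K=\mathcal R^{a/\nu}(\mathcal R^{-a/\nu}K)$ and uses the known Calderón–Zygmund estimates for the kernel of $\mathcal R^{-a/\nu}$ (a Riesz-type potential of negative homogeneous degree $-a$, which is locally integrable and has good off-diagonal behaviour on graded groups, cf. \cite{FischerRuzhanskyBook}) convolved against the bounded-in-$L^2$, compactly supported object $\mathcal R^{-a/\nu}K$; combined with the dilation structure from Lemma \ref{Fundamental:lemmaCZ:graded} and the support localisation, this should close the Hörmander estimate. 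Once (i) and (ii) hold, the Coifman–Weiss singular integral theorem on spaces of homogeneous type delivers the $L^p$-boundedness for all $1<p<\infty$.
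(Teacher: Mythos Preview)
Your $L^2$ argument has a genuine gap that reflects the main difficulty of the non-commutative setting. You claim that the hypothesis \eqref{Fourier:order} yields $\|\widehat{K}(2^{-j}\cdot\pi)v\|\lesssim \min\{(2^{-j\nu}\lambda)^{a/\nu},(2^{-j\nu}\lambda)^{-a/\nu}\}$ for $v$ in the spectral subspace of $\pi(\mathcal R)$ at height $\lambda$, and that summing this geometric series over $j$ bounds the multiplier uniformly. This works when $\widehat K(\pi)$ is a \emph{function of} $\pi(\mathcal R)$ (e.g.\ in $\mathbb R^n$, where everything is scalar), but in general $\widehat K(\pi)$ does \emph{not} commute with $\pi(\mathcal R)$. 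The bound $\|\pi(\mathcal R)^{a/\nu}\widehat K(\pi)\|_{\textnormal{op}}<\infty$ tells you that $\pi(\mathcal R)^{a/\nu}\widehat K(\pi)v$ is controlled, but $\widehat K(\pi)v$ need not lie in any fixed spectral subspace, so applying $\pi(\mathcal R)^{-a/\nu}$ back gains nothing. Equivalently: inserting a Littlewood--Paley projection $\Phi_l(\pi(\mathcal R))$ on the left of $\widehat K_j(\pi)$ gives $\|\Phi_l(\pi(\mathcal R))\widehat K_j(\pi)\|_{\textnormal{op}}\lesssim 2^{-|l-j|a}$, but summing first in $j$ and then in $l$ diverges; you need orthogonality on \emph{both} sides, which the hypothesis does not provide directly.

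The paper resolves this by \emph{not} attempting to bound $T$ as a single Calder\'on--Zygmund operator. Instead one fixes a dyadic Littlewood--Paley partition $\Phi_j$ built from the spectral calculus of $\mathcal R$, writes $K_j=\sum_k K_j\ast\Phi_{j+k}$, and sets $\tilde T_k f:=\sum_j f\ast K_j\ast\Phi_{j+k}$, so that $T=\sum_k\tilde T_k$. For fixed $k$ the pieces $\{f\ast K_j\ast\Phi_{j+k}\}_j$ are almost orthogonal in $L^2$ \emph{because} $\widehat\Phi_{j+k}(\pi)\widehat\Phi_{m+k}(\pi)=0$ for $|j-m|\ge2$; this is what replaces your spectral-subspace argument and yields $\|\tilde T_k\|_{L^2\to L^2}\lesssim 2^{-a|k|}$. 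For the kernel side one does \emph{not} verify the H\"ormander condition for $\sum_j K_j$ (your obstacle is real: there is no way to extract pointwise difference estimates on $K$ from the operator-norm hypothesis alone), but rather for the kernel $\sum_j K_j\ast\Phi_{j+k}$ of each $\tilde T_k$. Since $\Phi(\mathcal R)\delta$ is Schwartz by Hulanicki's theorem, these convolutions are smooth and the mean value theorem gives $[\tilde T_k\delta]_H\lesssim 1+|k|$. Marcinkiewicz interpolation then produces $\|\tilde T_k\|_{L^p\to L^p}\lesssim 2^{-a|k|\theta}(1+|k|)^{1-\theta}$ for $1<p<2$, which is summable in $k$; duality covers $p>2$. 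In short, the missing idea is the extra Littlewood--Paley decomposition in the parameter $k$, which simultaneously manufactures the almost-orthogonality needed for $L^2$ and the smoothness needed for the H\"ormander condition.
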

\begin{proof}
We start the proof by considering a suitable partition of  unity for the spectrum of the Rockland operator $\mathcal{R}.$ For this, let us take a function $\Phi\in \mathscr{S}(\mathbb{R})$ such that (see Lemma 3.13 in \cite{Guorong:Ruz})
\begin{equation}
 \forall\lambda\in (0,\infty),\,  \sum_{j=-\infty}^\infty\Phi(2^{j\nu}\lambda)=1.
\end{equation}Moreover, we can assume that $\Phi$ generates a dyadic partition in the sense that $\textnormal{supp}(\Phi)\subset [1/2^{\nu},2^{\nu}].$ As a consequence we have that
\begin{equation}
    \sum_{j=-\infty}^\infty\Phi(2^{j\nu}\mathcal{R})=I=\textnormal{ identity operator on }L^2(G),
\end{equation}and the convergence in $\mathscr{S}'(G)$ to the Dirac distribution 
\begin{equation}\label{equal:delta}
    \sum_{j=-\infty}^\infty\Phi(2^{j\nu}\mathcal{R})\delta=\delta.
\end{equation}In view of the property in \eqref{Functional:identities:spectral:calculus} for the functional calculus of $\mathcal{R}$, taking the group Fourier transform of \eqref{equal:delta} in both sides  we obtain that
\begin{equation}\label{Identity:decomposition:Hpi}
\forall\pi\in \widehat{G},\,  \sum_{j=-\infty}^\infty \mathscr{F}_{G}[\Phi(\mathcal{R})\delta](2^j\cdot \pi)=   \sum_{j=-\infty}^\infty\Phi[(2^j\cdot \pi)(\mathcal{R})]=I_{H_\pi},
\end{equation}where we have used that, for any $j,$ $(2^j\cdot \pi)(\mathcal{R})=2^{j\nu}\pi(\mathcal{R}).$ To simplify the notation, let us define
\begin{equation}
    \forall x\in G,\forall j\in \mathbb{Z},\,\Phi_j(x):=2^{-jQ}(\Phi(\mathcal{R})\delta)(2^{-j}\cdot),\,\,\Phi(x):=(\Phi(\mathcal{R})\delta)(x).
\end{equation}Then, we have
\begin{equation}\label{Cutt:off}
  \forall\pi\in \widehat{G}\,,\forall j\in \mathbb{Z},\,  \widehat{\Phi}_j(\pi)=\mathscr{F}_{G}[\Phi(\mathcal{R})\delta](2^j\cdot \pi)= \Phi[(2^j\cdot \pi)(\mathcal{R})]= \Phi[2^{j\nu} \pi(\mathcal{R})].
\end{equation}
In view of \eqref{Identity:decomposition:Hpi}, we conclude that
\begin{equation}
   \forall\pi\in \widehat{G},\,   \sum_{j=-\infty}^\infty\widehat{\Phi}_j(\pi)=I_{H_\pi},
\end{equation}and then, in the sense of distributions we have the identity
\begin{equation}\label{Phi:j:parittion}
     \sum_{j=-\infty}^\infty{\Phi}_j=\delta.
\end{equation}
Note also, that if $\{dE_{\pi(\mathcal{R})}\}_{\lambda>0}$ is the spectral measure of the operator
$\pi(\mathcal{R}),$ then
\begin{equation}\label{Functional:calculus}
\forall j\in \mathbb{Z},\, \forall\pi \in \widehat{G},\,\widehat{\Phi}_j(\pi)=   \Phi[(2^j\cdot \pi)(\mathcal{R})]=\Phi[2^{j\nu} \pi(\mathcal{R})]=\smallint\limits_{0}^\infty\Phi(2^{j\nu}\lambda)dE_{\pi(\mathcal{R})}(\lambda).
\end{equation}
These properties of the partition of the unity $\Phi_j,$ $j\in \mathbb{Z},$ will be used in our further analysis. Indeed, we start using \eqref{Phi:j:parittion}
to decompose any $K_j$ as follows:
\begin{equation}\label{Defi:Kj}
    K_j=K_j\ast \delta=  \sum_{k=-\infty}^\infty K_j\ast\Phi_{j+k}.
\end{equation}Define the linear operator $\tilde{T}_{k}$ for any $k\in \mathbb{Z}$ as follows,
\begin{equation}
 \forall f\in C^\infty_0(G),\,   \tilde{T}_{k}f:=\sum_{j=-\infty}^\infty f\ast K_j\ast \Phi_{j+k}.
\end{equation}Note that
\begin{equation*}
    \sum_{k=-\infty}^\infty \tilde{T}_{k}f=\sum_{k=-\infty}^\infty \sum_{j=-\infty}^\infty f\ast K_j\ast \Phi_{j+k}=\sum_{k=-\infty}^\infty \sum_{j=-\infty}^\infty f\ast K_k\ast \Phi_{k+j}=\sum_{k=-\infty}^\infty f\ast K_k=:Tf.
\end{equation*}In view of the last identity we will split our proof in the following steps.
\begin{itemize}
    \item Step 1. To estimate the norm of the operator $\tilde{T}_{k}:L^2(G)\rightarrow L^{2}(G).$ Moreover, we will prove that
    \begin{equation}\label{Step1:proof}
        \exists C>0,\,\forall f\in C^\infty_0(G),\,\Vert \tilde{T}_kf \Vert_{L^2(G)}\leq C 2^{-a|k|} \Vert f\Vert_{L^2(G)}.
    \end{equation}\label{Step2:proof}
    \item Step 2. To estimate the norm of the operator $\tilde{T}_{k}:L^1(G)\rightarrow L^{1,\infty}(G).$ Moreover, we will prove that
    \begin{equation}\label{Weak:1:1:estimate}
        \exists C>0,\,\forall\lambda>0,\,\forall f\in C^\infty_0(G),\, |\{x\in G:|\tilde{T}_k f(x)|>\lambda\}|\leq \frac{C(1+|k|)}{\lambda}\Vert f\Vert_{L^1(G)}.
    \end{equation} 
    \item Step 3. To use Marcinkiewicz interpolation theorem to prove that for any $1<p<2,$
    \begin{equation}\label{Lp:norm:tildeTk}
         \exists C>0,\,\forall f\in C^\infty_0(G),\,\Vert \tilde{T}_kf \Vert_{L^p(G)}\leq C_p 2^{-a|k|\theta}(1+|k|)^{1-\theta} \Vert f\Vert_{L^p(G)},
    \end{equation}where $1/p=\theta/2+(1-\theta).$
    \item Final Step. The proof of Lemma \ref{Lemma:main:dyadic} follows if we sum over $k\in \mathbb{Z}$  both sides of \eqref{Lp:norm:tildeTk}, in the case when
$1<p<2,$ and then  by the duality argument we complete then $L^p$-boundedness of $T$  for $2<p<\infty.$
\end{itemize}Once proved Steps 1 and 2 all the other steps above are clear. It remains, therefore, to prove Steps 1 and 2.

\subsubsection{Step 1.} 
 Let us  prove that
     \begin{equation}
        \exists C>0,\,\forall f\in C^\infty_0(G),\,\Vert \tilde{T}_kf \Vert_{L^2(G)}\leq C 2^{-a|k|} \Vert f\Vert_{L^2(G)}.
    \end{equation} From \eqref{Defi:Kj} we have the identity
    \begin{equation}
        \forall j\in \mathbb{Z},\,\forall\pi\in \widehat{G},\,\widehat{K}_j(\pi)= \sum_{m=-\infty}^{\infty}\widehat{\Phi}_{m+j}(\pi)\widehat{K}_{j}(\pi),
    \end{equation}in the strong topology on $H_\pi.$
Using this fact and Plancherel theorem we deduce that
\begin{equation}
    \Vert \tilde{T}_k f\Vert_{L^2(G)}^2=(\tilde{T}_k f,\tilde{T}_k f)_{L^2(G)}=(\widehat{\tilde{T}_k f},\widehat{\tilde{T}_k f})_{L^2(\widehat{G})},
\end{equation}and by writing the right-hand side of this identity in integral form we have that
\begin{align*}
 \Vert \tilde{T}_k f\Vert_{L^2(G)}^2 &=\sum_{m=-\infty}^{\infty}   \sum_{j=-\infty}^{\infty}  ( \widehat{\Phi}_{m+k}(\pi)\widehat{K}_m(\pi)\widehat{f}(\pi),\widehat{\Phi}_{j+k}(\pi)\widehat{K}_j(\pi)\widehat{f}(\pi))_{L^2(\widehat{G})} \\
 &\leq \sum_{m=-\infty}^{\infty}   \sum_{j=-\infty}^{\infty}  \smallint\limits_{\widehat{G}}|\textnormal{Tr} [\widehat{\Phi}_{m+k}(\pi)\widehat{K}_m(\pi)\widehat{f}(\pi)(\widehat{\Phi}_{j+k}(\pi)\widehat{K}_j(\pi)\widehat{f}(\pi))^*]|d\pi \\
 &=\sum_{j=-\infty}^{\infty}   \sum_{m=-\infty}^{\infty}  \smallint\limits_{\widehat{G}}|\textnormal{Tr} [\widehat{\Phi}_{m+k}(\pi)\widehat{K}_m(\pi)\widehat{f}(\pi)\widehat{f}(\pi)^*\widehat{K}_j(\pi)^*\widehat{\Phi}_{j+k}(\pi)^*]|d\pi. 
\end{align*}Because of \eqref{Cutt:off}, $\widehat{\Phi}_{j+k}(\pi)$ is self-adjoint in every representation space, and the operators $\widehat{\Phi}_{m+k}(\pi), \widehat{\Phi}_{j+k}(\pi),$ commute with each other and with other Borel functions of $\pi(\mathcal{R}),$ and we can write
\begin{align*}
  \Vert \tilde{T}_k f\Vert_{L^2(G)}^2 &\leq    \sum_{j=-\infty}^{\infty}   \sum_{m=-\infty}^{\infty}  \smallint\limits_{\widehat{G}}|\textnormal{Tr} [\widehat{\Phi}_{m+k}(\pi)\widehat{K}_m(\pi)\widehat{f}(\pi)\widehat{f}(\pi)^*\widehat{K}_j(\pi)^*\widehat{\Phi}_{j+k}(\pi)]|d\pi\\
  &=    \sum_{j=-\infty}^{\infty}   \sum_{m=-\infty}^{\infty}  \smallint\limits_{\widehat{G}}|\textnormal{Tr} [\widehat{K}_j(\pi)^*\widehat{\Phi}_{j+k}(\pi)\widehat{\Phi}_{m+k}(\pi)\widehat{K}_m(\pi)\widehat{f}(\pi)\widehat{f}(\pi)^*]|d\pi\\
   &=     \sum_{j=-\infty}^{\infty}   \sum_{m=-\infty}^{\infty}  \smallint\limits_{\widehat{G}}|\textnormal{Tr} [\widehat{K}(2^j\cdot \pi)^*[(2^j\cdot \pi)(\mathcal{R})^{\pm \frac{a}{\nu}}][(2^j\cdot \pi)(\mathcal{R})^{\mp \frac{a}{\nu}}]\\
   &\hspace{4cm}\widehat{\Phi}_{j+k}(\pi)\widehat{\Phi}_{m+k}(\pi)\widehat{K}_m(\pi)\widehat{f}(\pi)\widehat{f}(\pi)^*]|d\pi\\
   &\leq      \sum_{j=-\infty}^{\infty}   \sum_{m=-\infty}^{\infty}  \smallint\limits_{\widehat{G}} \| \widehat{K}(2^j\cdot \pi)^*[(2^j\cdot \pi)(\mathcal{R})^{\pm \frac{a}{\nu}}] \|_{\textnormal{op}}  |\textnormal{Tr} [ [(2^j\cdot \pi)(\mathcal{R})^{\mp \frac{a}{\nu}}]\\
   &\hspace{4cm}\widehat{\Phi}_{j+k}(\pi)\widehat{\Phi}_{m+k}(\pi)\widehat{K}_m(\pi)\widehat{f}(\pi)\widehat{f}(\pi)^*]|d\pi\\
   &=      \sum_{j=-\infty}^{\infty}   \sum_{m=-\infty}^{\infty}  \smallint\limits_{\widehat{G}} \| [[(2^j\cdot \pi)(\mathcal{R})^{\pm \frac{a}{\nu}}]\widehat{K}(2^j\cdot \pi)]^* \|_{\textnormal{op}}  |\textnormal{Tr} [ [(2^j\cdot \pi)(\mathcal{R})^{\mp \frac{a}{\nu}}]\\
   &\hspace{4cm}\widehat{\Phi}_{j+k}(\pi)\widehat{\Phi}_{m+k}(\pi)\widehat{K}_m(\pi)\widehat{f}(\pi)\widehat{f}(\pi)^*]|d\pi\\
   &=      \sum_{j=-\infty}^{\infty}   \sum_{m=-\infty}^{\infty}  \smallint\limits_{\widehat{G}} \| [(2^j\cdot \pi)(\mathcal{R})^{\pm \frac{a}{\nu}}]\widehat{K}(2^j\cdot \pi) \|_{\textnormal{op}}  |\textnormal{Tr} [ [(2^j\cdot \pi)(\mathcal{R})^{\mp \frac{a}{\nu}}]\\
   &\hspace{4cm}\widehat{\Phi}_{j+k}(\pi)\widehat{\Phi}_{m+k}(\pi)\widehat{K}_m(\pi)\widehat{f}(\pi)\widehat{f}(\pi)^*]|d\pi\\
   &\lesssim     \sum_{j=-\infty}^{\infty}   \sum_{m=-\infty}^{\infty}  \smallint\limits_{\widehat{G}}  |\textnormal{Tr} [ [(2^j\cdot \pi)(\mathcal{R})^{\mp \frac{a}{\nu}}]\\
   &\hspace{4cm}\widehat{\Phi}_{j+k}(\pi)\widehat{\Phi}_{m+k}(\pi)\widehat{K}_m(\pi)\widehat{f}(\pi)\widehat{f}(\pi)^*]|d\pi\\
   &=     \sum_{j=-\infty}^{\infty}   \sum_{m=-\infty}^{\infty}  \smallint\limits_{\widehat{G}}  |\textnormal{Tr} [ \widehat{\Phi}_{j+k}(\pi)\widehat{\Phi}_{m+k}(\pi)[(2^j\cdot \pi)(\mathcal{R})^{\mp \frac{a}{\nu}}]\widehat{K}_m(\pi)\widehat{f}(\pi)\widehat{f}(\pi)^*]|d\pi\\
   &=     \sum_{j=-\infty}^{\infty}   \sum_{m=-\infty}^{\infty}  \smallint\limits_{\widehat{G}}  |\textnormal{Tr} [ [\widehat{f}(\pi)\widehat{f}(\pi)^*]\widehat{\Phi}_{j+k}(\pi)\widehat{\Phi}_{m+k}(\pi)[(2^j\cdot \pi)(\mathcal{R})^{\mp \frac{a}{\nu}}]\widehat{K}_m(\pi)]|d\pi,
\end{align*}
where we have used the kernel condition
\begin{equation}\label{First:auxiliar:inequality}
    \sup_{j\in \mathbb{Z}; \,\pi\in \widehat{G}}\| [(2^j\cdot \pi)(\mathcal{R})^{\mp\frac{a}{\nu}}]\widehat{K}(2^j\cdot \pi) \|_{\textnormal{op}}  \leq \sup_{\pi\in \widehat{G}} \|  \pi(\mathcal{R})^{\mp \frac{a}{\nu}}\widehat{K}( \pi) \|_{\textnormal{op}}<\infty.  
\end{equation}
In view of \eqref{Functional:calculus}, the properties of the functional calculus allow us to write
\begin{equation}\label{product:finite}
    \widehat{\Phi}_{j+k}(\pi)\widehat{\Phi}_{m+k}(\pi)=  \smallint\limits_{0}^\infty\Phi(2^{(j+k)\nu}\lambda)\Phi(2^{(m+k)\nu}\lambda)dE_{\pi(\mathcal{R})}(\lambda).
\end{equation}
Since the support of $\Phi$ lies in the interval $[1/2^{\nu},2^{\nu}]$ we have that 
\begin{equation*}
    \forall\ell,\,\ell',\textnormal{  such that  }|\ell-\ell'|\geq 2,\,\, \Phi_{\ell}(\lambda)\Phi_{\ell'}(\lambda)=0.
\end{equation*}In consequence the representation in \eqref{product:finite} shows that $ \widehat{\Phi}_{j+k}(\pi)\widehat{\Phi}_{m+k}(\pi)\equiv 0_{H_\pi}$ if $|m-j|\geq 2.$ So, we have that 
\begin{equation}\label{The:integral:L2:tobe:estimated}
    \Vert \tilde{T}_k f\Vert_{L^2(G)}^2\lesssim \sum_{j=-\infty}^\infty\sum_{m=j-1}^{j+1}  \smallint\limits_{\widehat{G}}  |\textnormal{Tr} [ [\widehat{f}(\pi)\widehat{f}(\pi)^*]\widehat{\Phi}_{j+k}(\pi)\widehat{\Phi}_{m+k}(\pi)[(2^j\cdot \pi)(\mathcal{R})^{\mp \frac{a}{\nu}}]\widehat{K}_m(\pi)]|d\pi.
\end{equation} 
In view of the identity  $(2^j\cdot \pi)(\mathcal{R})=2^{j\nu}\times \pi(\mathcal{R}),$ $j\in \mathbb{Z},$  we have that 
\begin{align*}
 & \mathscr{A}(\pi):=  |\textnormal{Tr} [[\widehat{f}(\pi)\widehat{f}(\pi)^*]  \widehat{\Phi}_{j+k}(\pi)\widehat{\Phi}_{m+k}(\pi)[(2^j\cdot \pi)(\mathcal{R})^{\mp \frac{a}{\nu}}]\widehat{K}_m(\pi)]| \\
 &=|\textnormal{Tr} [ [\widehat{f}(\pi)\widehat{f}(\pi)^*]\widehat{\Phi}_{j+k}(\pi)\widehat{\Phi}_{m+k} [(2^{j}\cdot\pi)(\mathcal{R})]^{\mp \frac{a}{\nu}}(\pi)\widehat{K}(2^m\cdot\pi) ]|\\
   &=  |\textnormal{Tr} [ [\widehat{f}(\pi)\widehat{f}(\pi)^*]\widehat{\Phi}_{j+k}(\pi)\widehat{\Phi}_{m+k}(\pi) [2^{j\nu}\times \pi(\mathcal{R})]^{\mp \frac{a}{\nu}}\widehat{K}(2^m\cdot\pi)]|\\
    &=  |\textnormal{Tr} [ [\widehat{f}(\pi)\widehat{f}(\pi)^*] \widehat{\Phi}_{j+k}(\pi)\widehat{\Phi}_{m+k}(\pi) [2^{(j-m)\nu}\times 2^{m\nu}\times  \pi(\mathcal{R})]^{\mp \frac{a}{\nu}} \widehat{K}(2^m\cdot\pi)]|\\
    &= 2^{\mp (j-m)a}  |\textnormal{Tr} [ [\widehat{f}(\pi)\widehat{f}(\pi)^*] \widehat{\Phi}_{j+k}(\pi)\widehat{\Phi}_{m+k}(\pi) [ 2^{m\nu}\times  \pi(\mathcal{R})]^{\mp \frac{a}{\nu}}\widehat{K}(2^m\cdot\pi)]|.
\end{align*}Since $j-m=0,\pm 1,$ $2^{\mp 2(j-m)a}\asymp 1,$ we have
\begin{align*}
     & \mathscr{A}(\pi):=  |\textnormal{Tr} [[\widehat{f}(\pi)\widehat{f}(\pi)^*]  \widehat{\Phi}_{j+k}(\pi)\widehat{\Phi}_{m+k}(\pi)[(2^j\cdot \pi)(\mathcal{R})^{\mp \frac{a}{\nu}}]\widehat{K}_m(\pi)]|\\
     &=2^{\mp (j-m)a}  |\textnormal{Tr} [ [\widehat{f}(\pi)\widehat{f}(\pi)^*] \widehat{\Phi}_{j+k}(\pi)\widehat{\Phi}_{m+k}(\pi) [ 2^{m\nu}\times  \pi(\mathcal{R})]^{\mp \frac{a}{\nu}}\widehat{K}(2^m\cdot\pi)]|\\
     &\asymp |\textnormal{Tr} [ [\widehat{f}(\pi)\widehat{f}(\pi)^*] \widehat{\Phi}_{j+k}(\pi)\widehat{\Phi}_{m+k}(\pi) [ 2^{m\nu}\times  \pi(\mathcal{R})]^{\mp \frac{a}{\nu}}\widehat{K}(2^m\cdot\pi)]|.
\end{align*} Until now we have proved that the term
$$   \mathscr{A}(\pi):= |\textnormal{Tr} [[\widehat{f}(\pi)\widehat{f}(\pi)^*]  \widehat{\Phi}_{j+k}(\pi)\widehat{\Phi}_{m+k}(\pi)[(2^j\cdot \pi)(\mathcal{R})^{\mp \frac{a}{\nu}}]\widehat{K}_m(\pi)]|$$
can be estimated as follows
\begin{equation}\label{First:estimate}
   \mathscr{A}(\pi)\asymp |\textnormal{Tr} [ [\widehat{f}(\pi)\widehat{f}(\pi)^*] \widehat{\Phi}_{j+k}(\pi)\widehat{\Phi}_{m+k}(\pi) [ 2^{m\nu}\times  \pi(\mathcal{R})]^{\mp \frac{a}{\nu}}\widehat{K}(2^m\cdot\pi)]|.
\end{equation}
To estimate the right-hand side of \eqref{First:estimate} let us use \eqref{First:auxiliar:inequality}
 again as follows
\begin{align*}
  \mathscr{A}(\pi) &  \asymp |\textnormal{Tr} [ [\widehat{f}(\pi)\widehat{f}(\pi)^*] \widehat{\Phi}_{j+k}(\pi)\widehat{\Phi}_{m+k}(\pi)[ (2^{m}\cdot  \pi)(\mathcal{R})]^{\mp \frac{2a}{\nu}}[ (2^{m}\cdot  \pi)(\mathcal{R})]^{\pm \frac{a}{\nu}}\widehat{K}(2^m\cdot\pi) ]|\\
  &\leq  \textnormal{Tr} [ |[\widehat{f}(\pi)\widehat{f}(\pi)^*] \widehat{\Phi}_{j+k}(\pi)\widehat{\Phi}_{m+k}(\pi)[ (2^{m}\cdot  \pi)(\mathcal{R})]^{\mp \frac{2a}{\nu}}|]\Vert [ (2^{m}\cdot  \pi)(\mathcal{R})]^{\pm \frac{a}{\nu}}\widehat{K}(2^m\cdot\pi) \Vert_{\textnormal{op}}\\
  &\lesssim  \textnormal{Tr} [ |[\widehat{f}(\pi)\widehat{f}(\pi)^*] \widehat{\Phi}_{j+k}(\pi)\widehat{\Phi}_{m+k}(\pi)[ (2^{m}\cdot  \pi)(\mathcal{R})]^{\mp \frac{2a}{\nu}}|].
\end{align*}
Now, we have the better estimate (because it only depends on the functional calculus of the symbol $\pi(\mathcal{R})$ of the Rockland operator $\mathcal{R}$ and of $\widehat{f}$)
\begin{equation}\label{partial:II}
 \mathscr{A}(\pi)\lesssim \textnormal{Tr} [ |[\widehat{f}(\pi)\widehat{f}(\pi)^*] \widehat{\Phi}_{j+k}(\pi)\widehat{\Phi}_{m+k}(\pi)[ (2^{m}\cdot  \pi)(\mathcal{R})]^{\mp \frac{2a}{\nu}}|].   
\end{equation}
Note that when composing $\widehat{\Phi}_{j+k}(\pi)\widehat{\Phi}_{m+k}(\pi) $ with the operator $[ (2^{m}\cdot  \pi)(\mathcal{R})]^{\pm \frac{2a}{\nu}}$ we remove the zero from the spectrum of the new operator
\begin{equation}\label{operator:1:aux}
\widehat{\Phi}_{j+k}(\pi)\widehat{\Phi}_{m+k}(\pi)[ (2^{m}\cdot  \pi)(\mathcal{R})]^{\pm \frac{2a}{\nu}}    
\end{equation}
 because of the spectral identity in \eqref{Cutt:off} and the properties of the support of $\Phi.$ 
From \eqref{partial:II} we also have the estimate
\begin{equation}\label{partial:III}
 \mathscr{A}(\pi)\lesssim \textnormal{Tr} [ |[\widehat{f}(\pi)\widehat{f}(\pi)^*] \widehat{\Phi}_{j+k}(\pi)\widehat{\Phi}_{m+k}(\pi)[ (2^{m}\cdot  \pi)(\mathcal{R})]^{- \frac{2a}{\nu}}|].   
\end{equation}
Now, using the functional calculus for Rockland operators we can estimate \eqref{partial:III}. Indeed, we have
\begin{align*}
    &\textnormal{Tr} [ |[\widehat{f}(\pi)\widehat{f}(\pi)^*] \widehat{\Phi}_{j+k}(\pi)\widehat{\Phi}_{m+k}(\pi)[(2^{m}\cdot\pi)(\mathcal{R})]^{-\frac{2a}{\nu}}|]\\
    & =\textnormal{Tr} [ |[\widehat{f}(\pi)\widehat{f}(\pi)^*] \widehat{\Phi}_{j+k}(\pi)\widehat{\Phi}_{m+k}(\pi)[(2^{m\nu}\pi)(\mathcal{R})]^{-\frac{2a}{\nu}}|]\\
    &=\textnormal{Tr} [ |[\widehat{f}(\pi)\widehat{f}(\pi)^*] \widehat{\Phi}_{j+k}(\pi)\widehat{\Phi}_{m+k}(\pi)2^{-2ma}[\pi(\mathcal{R})]^{-\frac{2a}{\nu}}|]\\
    &=2^{-2ma}\textnormal{Tr} [ |[\widehat{f}(\pi)\widehat{f}(\pi)^*]\widehat{\Phi}_{j+k}(\pi)\widehat{\Phi}_{m+k}(\pi)[\pi(\mathcal{R})]^{-\frac{2a}{\nu}}|]\\
    &\asymp 2^{-2ja}\textnormal{Tr} [ |[\widehat{f}(\pi)\widehat{f}(\pi)^*] \widehat{\Phi}_{j+k}(\pi)\widehat{\Phi}_{m+k}(\pi)[\pi(\mathcal{R})]^{-\frac{2a}{\nu}}|].
\end{align*}  

Summarising we have proved that
\begin{equation}\label{Fisrt:implication:auxiliar}
    \mathscr{A}(\pi) \lesssim 2^{-2ja}\textnormal{Tr} [ |[\widehat{f}(\pi)\widehat{f}(\pi)^*] \widehat{\Phi}_{j+k}(\pi)\widehat{\Phi}_{m+k}(\pi)[\pi(\mathcal{R})]^{-\frac{2a}{\nu}}|].
\end{equation}
Note that in the same way that \eqref{partial:II} implies \eqref{Fisrt:implication:auxiliar} by changing $-a$ by $+a$ in the argument of this implication we also have that \eqref{partial:II} implies 
\begin{equation}
\label{second:implication:auxiliar}
     \mathscr{A}(\pi) \lesssim 2^{2ja}\textnormal{Tr} [ |[\widehat{f}(\pi)\widehat{f}(\pi)^*] \widehat{\Phi}_{j+k}(\pi)\widehat{\Phi}_{m+k}(\pi)[\pi(\mathcal{R})]^{\frac{2a}{\nu}}|].
\end{equation}
Then, from \eqref{Fisrt:implication:auxiliar} and \eqref{second:implication:auxiliar} we have the following similar bounds
\begin{equation}\label{similar:I}
   \mathscr{A}(\pi)\lesssim 2^{-2ja}\max_{\pm }\textnormal{Tr} [ |[\widehat{f}(\pi)\widehat{f}(\pi)^*] \widehat{\Phi}_{j+k}(\pi)\widehat{\Phi}_{m+k}(\pi)[\pi(\mathcal{R})]^{\pm \frac{2a}{\nu}}|], 
\end{equation}
    and 
\begin{equation}\label{similar:II}
   \mathscr{A}(\pi)\lesssim2^{2ja}\max_{\pm }\textnormal{Tr} [ |[\widehat{f}(\pi)\widehat{f}(\pi)^*] \widehat{\Phi}_{j+k}(\pi)\widehat{\Phi}_{m+k}(\pi)[\pi(\mathcal{R})]^{\pm \frac{2a}{\nu}}|]. 
\end{equation}Note that \eqref{similar:I} and \eqref{similar:II} 
imply the estimate
\begin{align*}&\mathscr{A}(\pi)\lesssim
\min\{2^{2ja},2^{-2ja}\}\max_{\pm }\textnormal{Tr} [ |[\widehat{f}(\pi)\widehat{f}(\pi)^*] \widehat{\Phi}_{j+k}(\pi)\widehat{\Phi}_{m+k}(\pi)[\pi(\mathcal{R})]^{\pm \frac{2a}{\nu}}|].   
\end{align*}
Since $\min\{2^{2ja},2^{-2ja}\}=2^{-2|j|a},$ we have deduced the estimate
 \begin{equation}\label{similar:III}
   \mathscr{A}(\pi)\lesssim2^{-2|j|a}\max_{\pm }\textnormal{Tr} [ |[\widehat{f}(\pi)\widehat{f}(\pi)^*] \widehat{\Phi}_{j+k}(\pi)\widehat{\Phi}_{m+k}(\pi)[\pi(\mathcal{R})]^{\pm \frac{2a}{\nu}}|]. 
\end{equation} Now in order to estimate   \eqref{similar:III} we can use the functional calculus for Rockland operators. 
Indeed, since $m\asymp j$  note that
\begin{align*}
    &\textnormal{Tr} [ |[\widehat{f}(\pi)\widehat{f}(\pi)^*]  \widehat{\Phi}_{j+k}(\pi)\widehat{\Phi}_{m+k}(\pi)\pi(\mathcal{R})^{\frac{\pm 2 a}{\nu}}|]\\
    &=\textnormal{Tr} [ |[\widehat{f}(\pi)\widehat{f}(\pi)^*]  \left( \smallint\limits_{0}^\infty\Phi(2^{(j+k)\nu}\lambda)\Phi(2^{(m+k)\nu}\lambda)dE_{\pi(\mathcal{R})}(\lambda)\right)  \pi(\mathcal{R})^{\frac{\pm 2a}{\nu}}|]\\
    &=\textnormal{Tr} [ |[\widehat{f}(\pi)\widehat{f}(\pi)^*]  \left( \smallint\limits_{0}^\infty\Phi(2^{(j+k)\nu}\lambda)\Phi(2^{(m+k)\nu}\lambda)\lambda^{\frac{\pm 2a}{\nu}}dE_{\pi(\mathcal{R})}(\lambda)\right)  |]\\
    & \asymp \textnormal{Tr} [ |[\widehat{f}(\pi)\widehat{f}(\pi)^*]  \left(\,\,\smallint\limits_{\lambda\sim 2^{-(j+k)\nu}}\Phi(2^{(j+k)\nu}\lambda)\Phi(2^{(m+k)\nu}\lambda)\lambda^{\frac{\pm 2a}{\nu}}dE_{\pi(\mathcal{R})}(\lambda)\right)  |],
\end{align*}where we have used the notation $\lambda\sim 2^{-(j+k)\nu} $ to indicate that $\lambda\in [2^{-(j+k+1)\nu},2^{-(j+k-1)\nu}].$ Using the properties of the spectral projections $E_{\pi(\mathcal{R})}$ we have that
\begin{align*}  
&\textnormal{Tr} [ |[\widehat{f}(\pi)\widehat{f}(\pi)^*]  \left(\,\,\smallint\limits_{\lambda\sim 2^{-(j+k)\nu}}\Phi(2^{(j+k)\nu}\lambda)\Phi(2^{(m+k)\nu}\lambda)dE_{\pi(\mathcal{R})}(\lambda)\right)  |]\\
&=\textnormal{Tr} [ |[\widehat{f}(\pi)\widehat{f}(\pi)^*] E_{\pi(\mathcal{R})}[2^{-(j+k+1)\nu},2^{-(j+k-1)\nu}]  \left(\,\,\smallint\limits_{\lambda\sim 2^{-(j+k)\nu}}\Phi(2^{(j+k)\nu}\lambda)\Phi(2^{(m+k)\nu}\lambda)dE_{\pi(\mathcal{R})}(\lambda)\right)  |]\\
&\leq \textnormal{Tr} [ |[\widehat{f}(\pi)\widehat{f}(\pi)^*] E_{\pi(\mathcal{R})}[2^{-(j+k+1)\nu},2^{-(j+k-1)\nu}]|]\\
&\hspace{2cm}\times \Vert\left(\,\,\smallint\limits_{\lambda\sim 2^{-(j+k)\nu}}\Phi(2^{(j+k)\nu}\lambda)\Phi(2^{(m+k)\nu}\lambda)dE_{\pi(\mathcal{R})}(\lambda)\right)  \|_{\textnormal{op}}.
\end{align*}
Using that $\lambda^{\frac{\pm 2 a}{\nu}}\sim 2^{\mp 2(j+k) a }$ when $\lambda\sim 2^{-(j+k)\nu},$ we have that 
\begin{align*}
  &\Vert \left( \,\,\smallint\limits_{\lambda\sim 2^{-(j+k)\nu}}\Phi(2^{(j+k)\nu}\lambda)\Phi(2^{(m+k)\nu}\lambda)\lambda^{\frac{\pm 2 a}{\nu}}dE_{\pi(\mathcal{R})}(\lambda)\right)  \|_{\textnormal{op}}\\
  \\
  &\asymp  2^{\mp 2(j+k) a }\Vert \left(\,\, \smallint\limits_{\lambda\sim 2^{-(j+k)\nu}}\Phi(2^{(j+k)\nu}\lambda)\Phi(2^{(m+k)\nu}\lambda)dE_{\pi(\mathcal{R})}(\lambda)\right)  \|_{\textnormal{op}}\\
  &\leq  2^{\mp 2(j+k) a }\Vert \left(\,\, \smallint\limits_{\lambda\geq 0}\Phi(2^{(j+k)\nu}\lambda)\Phi(2^{(m+k)\nu}\lambda)dE_{\pi(\mathcal{R})}(\lambda)\right)  \|_{\textnormal{op}}\\
  &= 2^{\mp 2(j+k) a }\Vert \Phi(2^{(j+k)\nu}\pi(\mathcal{R}))\Phi(2^{(m+k)\nu}\pi(\mathcal{R})) \|_{\textnormal{op}}\\
  &\leq 2^{\mp 2(j+k) a } \sup_{\lambda\geq0 }(\Phi(2^{(j+k)\nu}(\lambda))\Phi(2^{(m+k)\nu}\lambda)\\
  &\leq 2^{\mp 2(j+k) a } \Vert\Phi\Vert_{L^\infty}^2.
\end{align*}
So, we have proved the inequality
\begin{align*}
 &\textnormal{Tr} [ |[\widehat{f}(\pi)\widehat{f}(\pi)^*]  \left(\,\,\smallint\limits_{\lambda\sim 2^{-(j+k)\nu}}\Phi(2^{(j+k)\nu}\lambda)\Phi(2^{(m+k)\nu}\lambda)dE_{\pi(\mathcal{R})}(\lambda)\right)  |]   \\
 &\leq 2^{\mp 2(j+k) a } \Vert\Phi\Vert_{L^\infty}^2 \textnormal{Tr} [ |[\widehat{f}(\pi)\widehat{f}(\pi)^*] E_{\pi(\mathcal{R})}[2^{-(j+k+1)\nu},2^{-(j+k-1)\nu}]|],
\end{align*}
from where we deduce the estimate
\begin{align*}
   & \max_{\pm }\textnormal{Tr} [ |[\widehat{f}(\pi)\widehat{f}(\pi)^*]  \left(\,\,\smallint\limits_{\lambda\sim 2^{-(j+k)\nu}}\Phi(2^{(j+k)\nu}\lambda)\Phi(2^{(m+k)\nu}\lambda)dE_{\pi(\mathcal{R})}(\lambda)\right)  |] \\
    &\lesssim_{\Phi} 2^{ -2|j+k| a }\textnormal{Tr} [ |[\widehat{f}(\pi)\widehat{f}(\pi)^*] E_{\pi(\mathcal{R})}[2^{-(j+k+1)\nu},2^{-(j+k-1)\nu}]|].
    \end{align*}
The previous analysis allows us to estimate \eqref{similar:III}  as follows 
\begin{align*}
     \mathscr{A}(\pi)&\lesssim2^{-2|j|a}\max_{\pm }\textnormal{Tr} [ |[\widehat{f}(\pi)\widehat{f}(\pi)^*]  \left(\,\,\smallint\limits_{\lambda\sim 2^{-(j+k)\nu}}\Phi(2^{(j+k)\nu}\lambda)\Phi(2^{(m+k)\nu}\lambda)dE_{\pi(\mathcal{R})}(\lambda)\right)  |]\\
     &\lesssim_{\Phi,K,a}2^{-2|j|a}\times 2^{-2|j+k|a} \textnormal{Tr} [ |[\widehat{f}(\pi)\widehat{f}(\pi)^*] E_{\pi(\mathcal{R})}[2^{-(j+k+1)\nu},2^{-(j+k-1)\nu}]|].
\end{align*}Using the triangle inequality $|j+k|\geq |k|-|j|$ we have the reverse inequality
$$  -|j+k|\leq |j|-|k|,$$ and then
\begin{align*}    
\mathscr{A}(\pi)&\lesssim_{\Phi,K,a}2^{-2|j|a}\times 2^{-2|k|a+2|j|a}\textnormal{Tr} [ |[\widehat{f}(\pi)\widehat{f}(\pi)^*] E_{\pi(\mathcal{R})}[2^{-(j+k+1)\nu},2^{-(j+k-1)\nu}]|]\\
&=2^{-2|k|a}\textnormal{Tr} [ |[\widehat{f}(\pi)\widehat{f}(\pi)^*] E_{\pi(\mathcal{R})}[2^{-(j+k+1)\nu},2^{-(j+k-1)\nu}]|].
\end{align*}
In consequence, coming back to \eqref{The:integral:L2:tobe:estimated} we have that
\begin{align*}
    \Vert \tilde{T}_k f\Vert_{L^2(G)}^2 &\leq \sum_{j=-\infty}^\infty\sum_{m=j-1}^{j+1}\smallint\limits_{\widehat{G}}  \|  \widehat{\Phi}_{j+k}(\pi)\widehat{\Phi}_{m+k}(\pi)[(2^j\cdot \pi)(\mathcal{R})^{\pm\frac{a}{\nu}}]\widehat{K}_m(\pi)\|_{\textnormal{op} }     \|\widehat{f}(\pi)|_{ \textnormal{HS} }^2\,d\pi\\
    &\lesssim\sum_{j=-\infty}^\infty\sum_{m=j-1}^{j+1} 2^{-2|k|a}\smallint\limits_{\widehat{G}}\textnormal{Tr} [ |[\widehat{f}(\pi)\widehat{f}(\pi)^*] E_{\pi(\mathcal{R})}[2^{-(j+k+1)\nu},2^{-(j+k-1)\nu}]|]d\pi\\
    &\lesssim\sum_{j=-\infty}^\infty 2^{-2|k|a}\smallint\limits_{\widehat{G}}\textnormal{Tr} [ |[\widehat{f}(\pi)\widehat{f}(\pi)^*] E_{\pi(\mathcal{R})}[2^{-(j+k+1)\nu},2^{-(j+k-1)\nu}]|]d\pi.
\end{align*} 
Using the fact that the mapping $t_{j}:=j+\cdot:\mathbb{Z}\rightarrow \mathbb{Z},\,k\mapsto j+k, $ is a bijection on $\mathbb{Z}$, we have the estimates
\begin{align*}
&\Vert \tilde{T}_k f\Vert_{L^2(G)}^2\lesssim \sum_{j=-\infty}^\infty 2^{-2|k|a}\smallint\limits_{\widehat{G}}\textnormal{Tr} [ |[\widehat{f}(\pi)\widehat{f}(\pi)^*] E_{\pi(\mathcal{R})}[2^{-(j+1)\nu},2^{-(j-1)\nu}]|]d\pi\\
&\asymp  2^{-2|k|a}\smallint\limits_{\widehat{G}}\textnormal{Tr} [ |[\widehat{f}(\pi)\widehat{f}(\pi)^*] E_{\pi(\mathcal{R})}(-\infty,\infty)|]d\pi\\
&=  2^{-2|k|a}\smallint\limits_{\widehat{G}}\textnormal{Tr} [ |[\widehat{f}(\pi)\widehat{f}(\pi)^*] I_{H_\pi}]d\pi\\
&=2^{-2|k|a}\Vert \widehat{f}\Vert_{L^2(\widehat{G})}^2.
\end{align*}Then, using the last inequality and the Plancherel theorem we deduce that the $L^2$-norm of $\tilde{T}_k f$ satisfies the estimate
\begin{equation*}
    \Vert \tilde{T}_k f\Vert_{L^2(G)}^2 \lesssim_{\Phi,K,a} 2^{-2|k|a}\Vert f\Vert^2_{L^2(G)},
\end{equation*}which implies 
\begin{equation}
    \Vert \tilde{T}_k f\Vert_{L^2(G)} \lesssim_{\Phi,K,a} 2^{-|k|a}\Vert f\Vert_{L^2(G)}.
\end{equation} The proof of \eqref{Step1:proof} is complete and we have concluded Step 1 of the proof.
\subsubsection{ Step 2.} For the proof of the {\it weak (1,1) estimate} we will use the fundamental theorem for singular integrals due to Coifman and Weiss \cite{CoifmanWeiss1971}. We write
\begin{equation*}
 \forall f\in C^\infty_0(G),\,   \tilde{T}_{k}f:=f\ast (\tilde{T}_{k}\delta),\, (\tilde{T}_{k}\delta):=\sum_{j=-\infty}^\infty K_j\ast \Phi_{j+k}.
\end{equation*} We shall prove the estimate
\begin{equation}\label{auxiliar:weak:1:1}
  [\tilde{T}_{k}\delta]_{H}:= \sup_{|y|\leq 1} \smallint\limits_{|x|>2|y|}|\tilde{T}_{k}\delta(y^{-1}x)-\tilde{T}_{k}\delta(x)|dx\lesssim_{K}(1+|k|),
\end{equation}and then the constant $(1+|k|)$ of the right-hand side of \eqref{auxiliar:weak:1:1} gives the estimate in \eqref{Weak:1:1:estimate}.  
Indeed, note that the estimate in \eqref{auxiliar:weak:1:1} says that the kernel $\Tilde{T}_k\delta$ of $\Tilde{T}_k$ satisfies the H\"ormander condition $ [\tilde{T}_{k}\delta]_{H}\lesssim_{K}(1+|k|).$ Then, from the fundamental theorem for singular integrals due to Coifman and Weiss \cite{CoifmanWeiss1971}, one has that the $(L^1,L^{1,\infty})$-operator norm of $T$ satisfies the estimate
\begin{equation}
    \Vert \Tilde{T}_k\Vert_{L^1\rightarrow L^{1,\infty}}\lesssim \Vert \Tilde{T}_k\Vert_{L^2\rightarrow L^{2}}+ [\tilde{T}_{k}\delta]_{H}\lesssim 2^{-|k|a}+(1+|k|)\lesssim (1+|k|),
\end{equation}which proves \eqref{Weak:1:1:estimate}.

Note that for any $y\in G$ with $|y|\leq 1,$ we have
\begin{align*}
    \smallint\limits_{|x|>2|y|}|\tilde{T}_{k}\delta(y^{-1}x)-\tilde{T}_{k}\delta(x)|dx &=\smallint\limits_{|x|>2|y|}\left|\sum_{j=-\infty}^\infty (K_j\ast \Phi_{j+k})(y^{-1}x)-(K_j\ast \Phi_{j+k})(x)\right|dx\\
    &\leq \sum_{j=-\infty}^\infty \smallint\limits_{|x|>2|y|} \left|(K_j\ast \Phi_{j+k})(y^{-1}x)-(K_j\ast \Phi_{j+k})(x)\right|dx\\
     &= \sum_{j=-\infty}^\infty I_{j,k},
\end{align*}where 
$$ I_{j,k}=   \smallint\limits_{|x|>2|y|} \left|(K_j\ast \Phi_{j+k})(y^{-1}x)-(K_j\ast \Phi_{j+k})(x)\right|dx.$$
Note that, by the Hausdorff-Young inequality, we have the following immediate estimate
\begin{align*}
   I_{j,k} &\leq  \smallint\limits_{G} \left|(K_j\ast \Phi_{j+k})(y^{-1}x)-(K_j\ast \Phi_{j+k})(x)\right|dx \leq 2\smallint\limits_{G} \left|(K_j\ast \Phi_{j+k})(z)\right|dz\\
   &\leq \Vert K_j\Vert_{L^1(G)} \Vert \Phi_{j+k}\Vert_{L^1(G)}= \Vert K\Vert_{L^1(G)} \Vert \Phi(\mathcal{R})\delta\Vert_{L^1(G)}.
\end{align*}
Indeed, since $$ K_j:=2^{-jQ}K(2^{-j}\cdot) \textnormal{ and  } \Phi_{j+k}:=2^{-(j+k)Q}(\Phi(\mathcal{R})\delta)(2^{-(j+k)}\cdot),$$ the changes of variables $z=2^{-j}\cdot x$ and $z'=2^{-(j+k)}\cdot x'$ imply the equality
\begin{equation*}
 \Vert K_j\Vert_{L^1(G)}=  \smallint\limits_{G}|K_j(x)|dx= 2^{-jQ}\smallint\limits_{G}|K(2^{-j}\cdot x)|dx=\smallint\limits_{G}|K(z)|dz=\Vert K\Vert_{L^1(G)},
\end{equation*}as well as that
$$
 \displaystyle \Vert \Phi_{j+k}\Vert_{L^1(G)}= 2^{-(j+k)Q}\smallint\limits_{G}|(\Phi(\mathcal{R})\delta)(2^{-(j+k)}\cdot x')|dx'=\smallint\limits_{G}|(\Phi(\mathcal{R})\delta)(z')|dz'
$$
$$=\Vert (\Phi(\mathcal{R})\delta)\Vert_{L^1(G)}.$$
Then, we have proved that
\begin{align}\label{Trivial:estimate:for:}
 \forall 0<|y|\leq 1,\,\forall (j,k),\,\,\,   I_{j,k} \lesssim_{\Phi}  \Vert K\Vert_{L^1(G)}
 \end{align}
\noindent Also, we will provide other estimates for $I_{j,k} $ as follows. First, we use the dilation property
\begin{equation}\label{dilation:property}
     \forall r>0,\forall x,y\in G,\,\,r\cdot (xy)=(r\cdot x)(r\cdot y),\textnormal{ and }\,\,\,r\cdot x^{-1}=(r\cdot x)^{-1}.
\end{equation}
Then 
\begin{align*}
    &I_{j,k} =   \smallint\limits_{|x|>2|y|} \left|(K_j\ast \Phi_{j+k})(y^{-1}x)-(K_j\ast \Phi_{j+k})(x)\right|dx\\
     &=   \smallint\limits_{|x|>2|y|} | \smallint\limits_{G}K_j(y^{-1}xz^{-1}) \Phi_{j+k}(z)dz-\smallint\limits_{G}K_j(xz^{-1})\Phi_{j+k}(z)dz|dx\\
      &=   \smallint\limits_{|x|>2|y|}2^{-jQ} | \smallint\limits_{G}K(2^{-j}\cdot (y^{-1}xz^{-1})  ) 2^{-(j+k)Q})(\Phi(\mathcal{R})\delta)(2^{-j-k}z)dz\\
      &\hspace{4cm}-\smallint\limits_{G}K(2^{-j}\cdot (xz^{-1}))2^{-(j+k)Q}(\Phi(\mathcal{R})\delta)(2^{-j-k}\cdot z)dz|dx\\
&=   \smallint\limits_{|x|>2|y|}2^{-jQ} | \smallint\limits_{G}K((2^{-j}\cdot y^{-1})(2^{-j}\cdot x)(2^{-j}\cdot z^{-1})  ) 2^{-(j+k)Q}(\Phi(\mathcal{R})\delta)(2^{-j-k}z)dz\\
      &\hspace{4cm}-\smallint\limits_{G}K((2^{-j}\cdot x)(2^{-j}\cdot z^{-1}))2^{-(j+k)Q}(\Phi(\mathcal{R})\delta)(2^{-j-k}\cdot z)dz|dx\\
&=   \smallint\limits_{|x|>2|y|}2^{-jQ} | \smallint\limits_{G}K((2^{-j}\cdot y^{-1})(2^{-j}\cdot x)(2^{-j}\cdot z)^{-1}  ) 2^{-(j+k)Q}(\Phi(\mathcal{R})\delta)(2^{-j-k}z)dz\\
      &\hspace{4cm}-\smallint\limits_{G}K((2^{-j}\cdot x)(2^{-j}\cdot z)^{-1})2^{-(j+k)Q}(\Phi(\mathcal{R})\delta)(2^{-j-k}\cdot z)dz|dx      .
\end{align*}The change of variables $w:=2^{-j}\cdot z$ gives the new volume element $dw=2^{-jQ}dz,$ and we have  that
\begin{align*} &I_{j,k} \\
&=      \smallint\limits_{|x|>2|y|}2^{-jQ} | \smallint\limits_{G}K((2^{-j}\cdot y^{-1})(2^{-j}\cdot x)(2^{-j}\cdot z)^{-1}  ) 2^{-(j+k)Q}(\Phi(\mathcal{R})\delta)(2^{-j-k}z)dz\\
      &\hspace{4cm}-\smallint\limits_{G}K((2^{-j}\cdot x)(2^{-j}\cdot z)^{-1})2^{-(j+k)Q}(\Phi(\mathcal{R})\delta)(2^{-j-k}\cdot z)dz|dx
       \\
&=      \smallint\limits_{|x|>2|y|} | \smallint\limits_{G}K((2^{-j}\cdot y)^{-1}(2^{-j}\cdot x)w^{-1}  ) 2^{-kQ}2^{-jQ}(\Phi(\mathcal{R})\delta)(2^{-k}w)dw\\
      &\hspace{4cm}-\smallint\limits_{G}K((2^{-j}\cdot x)w^{-1})2^{-kQ}2^{-jQ}(\Phi(\mathcal{R})\delta)(2^{-k}\cdot w)dw|dx\\
 &=      \smallint\limits_{|x|>2|y|}2^{-jQ} | \smallint\limits_{G}K((2^{-j}\cdot y)^{-1}(2^{-j}\cdot x)w^{-1}  ) 2^{-kQ}(\Phi(\mathcal{R})\delta)(2^{-k}w)dw\\
      &\hspace{4cm}-\smallint\limits_{G}K((2^{-j}\cdot x)w^{-1})2^{-kQ}(\Phi(\mathcal{R})\delta)(2^{-k}\cdot w)dw|dx.
\end{align*}On the other hand, the change of variables $x'=2^{-j}\cdot x$ gives the new volume element $dx'=2^{-jQ}dx,$ the zone $\{x\in G:|x|>2|y|\}$ is mapped to the set 
$$   \{x'\in G: |x'|>2^{-j+1}|y|\}$$ allowing us to write the following identities
\begin{align*} &I_{j,k}\\
&=      \smallint\limits_{|x'|>2^{1-j}|y|} 2^{-jQ}  | \smallint\limits_{G}K((2^{-j}\cdot y)^{-1}{x'}w^{-1}  ) 2^{-kQ}(\Phi(\mathcal{R})\delta)(2^{-k}w)dw\\
      &\hspace{4cm}-\smallint\limits_{G}K({x'}w^{-1})2^{-kQ}(\Phi(\mathcal{R})\delta)(2^{-k}\cdot w)dw|2^{jQ}dx'\\
     &= \smallint\limits_{|x|>2^{1-j}|y|}  | \smallint\limits_{G}K((2^{-j}\cdot y)^{-1}{x}w^{-1}  )\Phi_k(w)dw-\smallint\limits_{G}K({x}w^{-1})\Phi_k(w)dw| dx\\
     &= \smallint\limits_{|x|>2^{1-j}|y|}  | K\ast \Phi_k((2^{-j}\cdot y)^{-1}{x})-K\ast \Phi_k({x})|dx\\
     &= \smallint\limits_{|x|>2^{1-j}|y|}  | \smallint\limits_{G}K(z)[\Phi_k(z^{-1}(2^{-j}\cdot y)^{-1}{x})-\Phi_k(z^{-1}{x})]dz|dx\\
     &= \smallint\limits_{|x|>2^{1-j}|y|} 2^{-kQ} | \smallint\limits_{G}K(z)[\Phi((2^{-k}\cdot z)^{-1}(2^{-j-k}\cdot y)^{-1}(2^{-k}\cdot{x}))-\Phi((2^{-k}\cdot z)^{-1}(2^{-k}\cdot {x}))]dz|dx\\
     &\leq  \smallint\limits_{|x|>2^{1-j}|y|} 2^{-k Q}  \smallint\limits_{G}|K(z)||[\Phi((2^{-k}\cdot z)^{-1}(2^{-j-k}\cdot y)^{-1}(2^{-k}\cdot{x}))-\Phi((2^{-k}\cdot z)^{-1}(2^{-k}\cdot {x}))]|dzdx,
\end{align*} where we have used the dilation property in \eqref{dilation:property} with $r=2^{-k}$.

The change of variables $x'=2^{-k}\cdot x$ and the new volume element $dx'=2^{-kQ}dx$ implies that
\begin{align*}
 &I_{j,k}\\
 & \leq \smallint\limits_{|x'|>2^{1-j-k}|y|}  \smallint\limits_{G}|K(z)||[\Phi((2^{-k}\cdot z)^{-1}(2^{-j-k}\cdot y)^{-1}(x'))-\Phi((2^{-k}\cdot z)^{-1}(x'))]|dzdx'.
\end{align*}Now, let us make  the change of variables $z''=2^{-k}\cdot z.$ We have that the new volume element  $2^{kQ}dz''=dz.$ Then, 
\begin{align*}
 &I_{j,k}\\
 & \leq \smallint\limits_{|x'|>2^{1-j-k}|y|}  \smallint\limits_{G}2^{kQ}|K(2^{k}\cdot z'')||[\Phi((z'')^{-1}(2^{-j-k}\cdot y)^{-1}(x'))-\Phi((z'')^{-1}(x'))]|dz''dx'.
\end{align*}
Now, the mean value theorem (see \cite[Page 119]{FischerRuzhanskyBook}) implies that
\begin{align*}
 &|[\Phi((z'')^{-1}(2^{-j-k}\cdot y)^{-1}(x'))-\Phi((z'')^{-1}(x'))]| \\
 &\lesssim\sum_{\ell=1}^{n}|(2^{-j-k}\cdot y)^{-1}|^{\nu_{\ell} }  \sup_{|z'|\lesssim |(2^{-j-k}\cdot y)^{-1}|  }|(X_{z',\ell} (\Phi((z'')^{-1}z'(x'))) |,
\end{align*}
where we have written that $|z'|\lesssim |(2^{-j-k}\cdot y)^{-1}|  $  to indicate that the inequality
\begin{align}\label{Taylor:constant}
 {|z'|\leq c |(2^{-j-k}\cdot y)^{-1}|= c 2^{-j-k}| y|  }  
\end{align}is valid for some universal  constant $c>1$ as in the mean value theorem of \cite[Page 119]{FischerRuzhanskyBook}.
Using that  $ \smallint_{G}2^{kQ}|K(2^{k}\cdot z^{''})|dz''=\Vert K\Vert_{L^1(G)},$ we can estimate $I_{j,k}$ as follows
\begin{align*}
    &I_{j,k}\\
   &\leq 2^{kQ} \smallint\limits_{G} |K(2^{k}\cdot z^{''})|dz'' \\
   &\times \sup_{z\in G}\smallint\limits_{|x'|>2^{1-j-k}|y|}     \left(\sum_{\ell=1}^{n}2^{-(j+k)\nu_{\ell}}| y |^{\nu_{\ell}}\sup_{|z'|\lesssim 2^{-j-k} | y|  }|X_{z,\ell}( \Phi ( z^{-1}  z'x') |\right)  dx'\\
   &=\|K\|_{L^1(G)} \\
   &\times\sum_{\ell=1}^{n}2^{-(j+k)\nu_{\ell}}| y |^{\nu_{\ell}} \sup_{z\in G}\smallint\limits_{|x'|>2^{1-j-k}|y|}     \sup_{|z'|\lesssim 2^{-j-k} | y|  }|X_{z,\ell}( \Phi ( z^{-1}  z'x') |dx'.
\end{align*}
Using the Sobolev embedding theorem on $G$ (see Theorem 4.4.25 of \cite[page 241]{FischerRuzhanskyBook}), we can estimate for $M_0>Q/2,$
\begin{align*}
 & \sup_{z\in G}\smallint\limits_{|x'|>2^{1-j-k}|y|}     \sup_{|z'|\lesssim 2^{-j-k} | y|  }|X_{z,\ell}( \Phi ( z^{-1}  z'x') |dx'\\
  &\lesssim \sup_{z\in G}\smallint\limits_{G}     \sup_{z'\in G }|X_{z,\ell}( \Phi ( z^{-1}  z'x') |dx'\\
 &\lesssim  \sup_{z\in G}\sum_{[\beta]\leq M_0 }\smallint\limits_{G} \Vert X_{z'}^\beta X_{z,\ell}( \Phi ( z^{-1}  z'x') \Vert_{L^2(G,dz')}dx'\\
 &\lesssim  \sup_{z\in G}\sum_{[\beta]\leq M_0 }\left(\smallint\limits_{G}(1+|x'|)^{2M_0} \Vert X_{z'}^\beta X_{z,\ell}( \Phi ( z^{-1}  z'x') \Vert_{L^2(G,dz')}^{2}dx'\right)^{\frac{1}{2}}\left(\smallint\limits_{G}(1+|x'|)^{-2M_0}dx'\right)^{\frac{1}{2}}\\
 &\lesssim  \sup_{z\in G}\sum_{[\beta]\leq M_0 }\left(\smallint\limits_{G}\smallint\limits_{G}(1+|x'|)^{2M_0} | X_{z'}^\beta X_{z,\ell}( \Phi ( z^{-1}  z'x') |^{2}dx'dz'\right)^{\frac{1}{2}}<\infty,
\end{align*}where the convergence of the last integral is justified by Hulanicki theorem, see \cite[page 251]{FischerRuzhanskyBook}.
 All the analysis above shows that for all $j,k\in \mathbb{Z},$ 
\begin{equation}\label{First:estimate:Ijk}
\forall y\in G: 0<|y|\leq 1,\,\,    I_{j,k}\lesssim  \sum_{\ell=1}^{n}2^{-(j+k)\nu_{\ell}}| y |^{\nu_{\ell}}\Vert K\Vert_{L^1(G)}.
\end{equation}
In terms of $R>0$ defined by
\begin{equation}\label{R:support}
    R=\inf\{R'>0:\textnormal{supp}(K)\subset B(e,R')\},
\end{equation}
  and of $0<|y|\leq 1,$ let us make a more precise estimate of $I_{j,k}$ in the case where $2^{-j}|y|\geq 2R.$ This will be used in further analysis.   To do this, let us come back to the estimate
\begin{align*}
  I_{j,k}\lesssim \sum_{\ell=1}^{n}2^{-(j+k)\nu_{\ell}}| y |^{\nu_{\ell}} & \smallint\limits_{|x'|>2^{1-j-k}|y|}  \smallint\limits_{G}   |K(z)| \sup_{|z'|\lesssim 2^{-j-k} | y|  }|X_{z,\ell}( \Phi ((2^{-k}\cdot z)^{-1}  z'x') | dz\, dx'\\
 &= \sum_{\ell=1}^{n}2^{-(j+k)\nu_{\ell}}| y |^{\nu_{\ell}}  \mathscr{I}_{j,k,\ell},
\end{align*}
where 
$$    \mathscr{I}_{j,k,\ell}=\smallint\limits_{|x'|>2^{1-j-k}|y|}  \smallint\limits_{G}   |K(z)| \sup_{|z'|\lesssim 2^{-j-k} | y|  }|X_{z,\ell}( \Phi ((2^{-k}\cdot z)^{-1}  z'x') | dz\, dx'.$$
To estimate this double integral let us split it as follows
\begin{align*}
    &\mathscr{I}_{j,k,\ell}=\smallint\limits_{|x'|>2^{1-j-k}|y|}  \smallint\limits_{G}   |K(z)| \sup_{|z'|\lesssim 2^{-j-k} | y|  }|X_{z,\ell}( \Phi ((2^{-k}\cdot z)^{-1}  z'x') |  dz\, dx'\\
    &  =\smallint\limits_{|x'|>2^{1-j-k}|y|}  \smallint\limits_{\{z:|x'|\geq 2^{-k+2}|z|\}}   |K(z)| \sup_{|z'|\lesssim 2^{-j-k} | y|  }|X_{z,\ell}( \Phi ((2^{-k}\cdot z)^{-1}  z'x') |  dz\, dx'\\
    &\hspace{2cm}+ \smallint\limits_{|x'|>2^{1-j-k}|y|}  \smallint\limits_{\{z:|x'|<2^{-k+2}|z|\}} |K(z)| \sup_{|z'|\lesssim 2^{-j-k} | y|  }|X_{z,\ell}( \Phi ((2^{-k}\cdot z)^{-1}  z'x') |  dz\, dx'\\
    &=\mathscr{I}_{j,k,\ell}^{I}+\mathscr{I}_{j,k,\ell}^{II}.
\end{align*}
Observe that for the integral
\begin{align*}
  \mathscr{I}_{j,k,\ell}^{I}=  \smallint\limits_{|x'|>2^{1-j-k}|y|}  \smallint\limits_{\{z:|x'|\geq 2^{-k+2}|z|\}}   |K(z)| \sup_{|z'|\lesssim 2^{-j-k} | y|  }|X_{z,\ell}( \Phi ((2^{-k}\cdot z)^{-1}  z'x') |  dz\, dx',
\end{align*} the integral with respect to $dz$ is computed on the zone $\{z:|x'|\geq 2^{-k+2}|z|\},$ where one has $-|x'|/4\leq - 2^{-k}|z|.$ On the other hand for the integral with respect to $dx',$ on the region $\{x':|x'|>2^{1-j-k}|y|\},$ one has that $|x'|/2>2^{-j-k}|y|.$ In consequence, for some constant $0<C<c,$ where $c$ is the universal constant in \eqref{Taylor:constant}, and then independent of $j$ and $k,$ one has that
\begin{align*}
    |(2^{-k}\cdot z)^{-1}  z'x'|\geq |x'|-|z'|-2^{-k}|z| 
    &\geq C(|x'|-2^{-j-k}|y|-\frac{1}{4}|x'|)\\
    &= C(|x'|/2-2^{-j-k}|y|+|x'|/2-\frac{1}{4}|x'|)\\
    &>  C(|x'|/2-\frac{1}{4}|x'|)\\
    &= C \frac{|x'|}{4}.
\end{align*}Then, estimating again $|X_{z,\ell}( \Phi ((2^{-k}\cdot z)^{-1}  z'x') |\leq C_{L}(1+|((2^{-k}\cdot z)^{-1}  z'x')|)^{-L},$ with $L$ to be determined later, we have
\begin{align*}
     &\smallint\limits_{|x'|>2^{1-j-k}|y|}  \smallint\limits_{\{z:|x'|\geq 2^{-k+2}|z|\}}   |K(z)| \sup_{|z'|\lesssim 2^{-j-k} | y|  }|X_{z,\ell}( \Phi ((2^{-k}\cdot z)^{-1}  z'x') | dx'\,dz\\
     &\leq \smallint\limits_{|x'|>2^{1-j-k}|y|}  \smallint\limits_{\{z:|x'|\geq 2^{-k+2}|z|\}}   |K(z)| \sup_{|z'|\lesssim 2^{-j-k} | y|  }C_{L}(1+|((2^{-k}\cdot z)^{-1}  z'x')|)^{-L} dx'\,dz\\
     &\leq \smallint\limits_{|x'|>2^{1-j-k}|y|}  \smallint\limits_{G}   |K(z)| dz C_{L}(1+\frac{1}{4}|x'|)^{-L} dx'.
\end{align*}Then, with $L=Q+1+2\nu_\ell,$ we have
\begin{align*}
    &\smallint\limits_{|x'|>2^{1-j-k}|y|}  \smallint\limits_{G}   |K(z)| dz C_{L}(1+\frac{1}{4}|x'|)^{-L} dx'\\
    &=\Vert K\Vert_{L^1(G)} \smallint\limits_{|x'|>2^{1-j-k}|y|}   C_{L}(1+\frac{1}{4}|x'|)^{-Q-1}(1+|x'|)^{-2\nu_\ell} dx'\\
     &\lesssim\Vert K\Vert_{L^1(G)}(2^{1-j-k}|y|)^{-2\nu_\ell} \smallint\limits_{|x'|>2^{1-j-k}|y|}   C_{L}(1+\frac{1}{4}|x'|)^{-Q-1} dx'\\
     &\leq\Vert K\Vert_{L^1(G)}(2^{1-j-k}|y|)^{-2\nu_\ell} \smallint\limits_{G}   C_{L}(1+\frac{1}{4}|x'|)^{-Q-1} dx'.
\end{align*}In consequence we have that
\begin{align*}
    \mathscr{I}_{j,k,\ell}^{I}\lesssim \Vert K\Vert_{L^1(G)}(2^{1-j-k}|y|)^{-2\nu_\ell} .
\end{align*}
Now, note that when $2^{-j} |y|\geq 2R,$ and for $|x'|<2^{-k+2}|z|,$ (and then $|x'|<2^{-k+2}R,$ since $z$ belongs to the support of $K$) we have that
\begin{equation}
 \forall 0<|y|\leq 1,\,\,2^{-j} |y|\geq 2R,\,   \{(x',z):|x'|>2^{1-j-k}|y|  \}\cap \{(x',z):|x'|<2^{-k+2}|z|  \}=\emptyset.
\end{equation} This implies that when $ 0<|y|\leq 1,\,\,2^{-j} |y|\geq 2R,$
\begin{equation}
 \mathscr{I}_{j,k,\ell}^{II}=    \smallint\limits_{|x'|>2^{1-j-k}|y|}  \smallint\limits_{\{z:|x'|<2^{-k+2}|z|\}} |K(z)| \sup_{|z'|\lesssim 2^{-j-k} | y|  }|X_{z,\ell}( \Phi ((2^{-k}\cdot z)^{-1}  z'x') | dx'\,dz=0.
\end{equation}

In summarising we have proved the following estimates

\begin{itemize}\label{Aux:1}
    \item $\forall 0<|y|\leq 1,\,\forall j,k\in \mathbb{Z}\,$  
    \begin{equation}
     I_{j,k}\lesssim\sum_{\ell=1}^{n}  2^{-(j+k)\nu_{\ell}}| y |^{\nu_{\ell}}\Vert K\Vert_{L^1(G)}.   
    \end{equation}Moreover, in view of \eqref{Trivial:estimate:for:} we have that 
    $\forall 0<|y|\leq 1,\,\forall j,k\in \mathbb{Z}\,$  
    \begin{equation}\label{Aux:genial}
     I_{j,k}\lesssim\sum_{\ell=1}^{n} \min\{1,\, 2^{-(j+k)\nu_{\ell}}| y |^{\nu_{\ell}}\}\Vert K\Vert_{L^1(G)}.   
    \end{equation}
    \item $\forall 0<|y|\leq 1,\,\,2^{-j} |y|\geq 2R,$ 
    \begin{equation}\label{Aux:2}
         I_{j,k}\lesssim \sum_{\ell=1}^{n} 2^{-(j+k)\nu_{\ell}}| y |^{\nu_{\ell}} (2^{1-j-k}|y|)^{-2\nu_\ell}\Vert K\Vert_{L^1(G)}\lesssim  \sum_{\ell=1}^{n} (2^{-(j+k)\nu_{\ell}}| y |^{\nu_{\ell}})^{-1}\Vert K\Vert_{L^1(G)}.
    \end{equation}
\end{itemize} Now, let us use these inequalities to estimate the H\"ormander condition
\begin{equation}    \label{auxiliar:weak:1:1:2}
   \sup_{|y|\leq 1} \smallint\limits_{|x|>2|y|}|\tilde{T}_{k}\delta(y^{-1}x)-\tilde{T}_{k}\delta(x)|dx\leq \sum_{j\in \mathbb{Z}}I_{j,k}.
\end{equation}Indeed, we consider the cases where $2^{-k}>(2R)^{-1}$ and where $2^{-k}\leq (2R)^{-1}.$
\begin{itemize}
    \item Case (i):  $2^{-k}>(2R)^{-1}.$ Observe that when $ \frac{1}{2^{-k}|y|}<2^{-j},$
    we have that $1< 2^{-j-k}|y|.$ In this situation, the fact that $\min\{1,2^{-j-k}|y|\}=1,$ and the estimate in \eqref{Aux:genial} imply that
    \begin{align*}
       \sum_{j}I_{j,k} & \lesssim\sum_{\ell=1}^n \Vert K\Vert_{L^1(G)}\left(\sum_{2^{-j}\leq \frac{1}{2^{-k}|y|}}2^{-(j+k)\nu_{\ell}}| y |^{\nu_{\ell}}+\sum_{\frac{1}{2^{-k}|y|}<  2^{-j}\leq \frac{2R}{|y|} } 1+\sum_{2^{-j}\geq \frac{2R}{|y|}  } (2^{-(j+k)\nu_{\ell}}| y |^{\nu_{\ell}})^{-1}\right)\\
       &\lesssim \sum_{\ell=1}^n \Vert K\Vert_{L^1(G)}(\log (R)+|k|)\lesssim  \Vert K\Vert_{L^1(G)}(\log (R)+|k|)\lesssim_{n,K}\Vert K\Vert_{L^1(G)}(1+|k|) .
    \end{align*} Indeed, observe that the sums $\sum_{2^{-j}\leq \frac{1}{2^{-k}|y|}}2^{-(j+k)\nu_{\ell}}| y |^{\nu_{\ell}}$ and $\sum_{2^{-j}\geq \frac{2R}{|y|}  } (2^{-(j+k)\nu_{\ell}}| y |^{\nu_{\ell}})^{-1}$ can be handled as geometric sums in order to get
    \begin{align*}
        \sum_{2^{-j}\leq \frac{1}{2^{-k}|y|}}2^{-(j+k)\nu_{\ell}}| y |^{\nu_{\ell}}=2^{-k\nu_\ell}|y|^{\nu_\ell}\sum_{2^{-j}\leq \frac{1}{2^{-k}|y|}}2^{-j\nu_{\ell}} \asymp 2^{-k\nu_\ell}|y|^{\nu_\ell}(2^{-k\nu_\ell}|y|^{\nu_\ell})^{-1}=1,
    \end{align*}and 
    \begin{align*}
        \sum_{2^{-j}\geq \frac{2R}{|y|}  } (2^{-(j+k)\nu_{\ell}}| y |^{\nu_{\ell}})^{-1}&=  (2^{-k\nu_{\ell}}| y |^{\nu_{\ell}})^{-1} \sum_{2^{j}\leq \frac{|y|}{2R}  } 2^{j\nu_{\ell}}\leq  (2^{-k\nu_{\ell}}| y |^{\nu_{\ell}})^{-1} \sum_{2^{j}\leq \frac{|y|}{2^{k}}  } 2^{j\nu_{\ell}}\\
        &\lesssim (2^{-k\nu_{\ell}}| y |^{\nu_{\ell}})^{-1}2^{-k\nu_{\ell}}| y |^{\nu_{\ell}}= 1. 
    \end{align*}As for the sum $\sum_{\frac{1}{2^{-k}|y|}<  2^{-j}\leq \frac{2R}{|y|} } 1,$ let $j_0\in \mathbb{Z}$ and $\ell_0\in \mathbb{N},$ be such that
    $$2^{-j_0}\leq \frac{1}{2^{-k}|y|}\leq 2^{-j_0+1},\,\, 2^{-j_0+\ell_0}\leq \frac{2R}{|y|}\leq 2^{-j_0+\ell_0+1} .  $$
    Then
    \begin{align*}
      \sum_{\frac{1}{2^{-k}|y|}<  2^{-j}\leq \frac{2R}{|y|} } 1\lesssim|\{j\in \{0,1,\cdots, \ell_0\}:2^{-j_0+j}\in [2^{-j_0},2^{-j_0+\ell_0}]\}|=\ell_0+1\lesssim\ell_0. 
    \end{align*}Now, to estimate $\ell_0$ in terms of $R$ and of $|k|,$ note that $2^{-j_0}\asymp  \frac{1}{2^{-k}|y|},$ and $2^{-j_0+\ell_0}\asymp   \frac{2R}{|y|}.$ We have that
    $$2^{-j_0-k}\asymp \frac{1}{|y|}\asymp \frac{2^{-j_0+\ell_0}}{2R}.   $$
    From this estimate we deduce that $2^{\ell_0}\asymp 2^{-k}\times 2R\lesssim  2^{|k|+\log_{2}(R)},$ from where we have that $\ell_0\lesssim  \textnormal{log}(R)+|k|\lesssim_{R}(1+|k|). $

  \item   Case (ii):  $2^{-k}\leq (2R)^{-1}.$ Again we divide the sum, but this time in two terms as follows
    \begin{align*}
        \sum_{j}I_{j,k} & \lesssim\sum_{\ell=1}^n \Vert K\Vert_{L^1(G)}\left(\sum_{2^{-j}\leq \frac{1}{2^{-k}|y|}} 2^{-(j+k)\nu_\ell}|y|^{\nu_\ell}+ \sum_{2^{-j}> \frac{1}{2^{-k}|y|}}(2^{-(j+k)\nu_\ell}|y|^{\nu_\ell})^{-1}\right)\\
         & \lesssim\sum_{\ell=1}^n \Vert K\Vert_{L^1(G)}=n\Vert K\Vert_{L^1(G)}.
    \end{align*}
\end{itemize}All the estimates above prove that 
\begin{equation}\label{auxiliar:weak:1:1:Final}
   \sup_{|y|\leq 1} \smallint\limits_{|x|>2|y|}|\tilde{T}_{k}\delta(y^{-1}x)-\tilde{T}_{k}\delta(x)|dx\lesssim_{n,K}\Vert K\Vert_{L^1(G)}(1+|k|),
\end{equation} as desired. So, for any $k\in \mathbb{Z}$ we have proved that 
$\tilde{T}_{k}:L^2(G)\rightarrow L^2(G)$ and $\tilde{T}_{k}:L^1(G)\rightarrow L^{1,\infty}(G)$ are bounded operators with the operator norms satisfying the bounds
\begin{equation}
    \Vert \tilde{T}_{k} \Vert_{\mathscr{B}(L^2(G))}\lesssim 2^{-a|k|},\,\, \Vert \tilde{T}_{k} \Vert_{\mathscr{B}(L^1(G),L^{1,\infty}(G))}\lesssim (1+|k|).
\end{equation} By using the Marcinkiewicz interpolation theorem, we have for all $1<p<2,$ the bound
\begin{equation}
   \Vert \tilde{T}_{k} \Vert_{\mathscr{B}(L^p(G))}\lesssim 2^{-a|k|\theta_p}(1+|k|)^{1-|\theta_p|},
\end{equation} where $1/p=\theta_p/2+(1-\theta_p).$ Consequently 
\begin{equation}
    \Vert T\Vert_{\mathscr{B}(L^p(G))}\lesssim \sum_{k\in \mathbb{Z}}  \Vert \tilde{T}_{k} \Vert_{\mathscr{B}(L^p(G))}\lesssim \sum_{k\in \mathbb{Z} }2^{-a|k|\theta_p}(1+|k|)^{1-|\theta_p|}<\infty,
\end{equation}for all $1<p<2.$ The boundedness of $T$ on $L^p(G)$ for all $2<p<\infty$ now follows from the duality argument.
\end{proof}
\subsection{The probabilistic argument}
Now we present a lemma about the $G$-function associated to the family $K_j$ of kernels in Lemma \ref{Lemma:main:dyadic}.
\begin{lemma}\label{Rademacher:lemma}  Let $K$ be a distribution  as in Lemma \ref{Lemma:main:dyadic}. Then the square function 
\begin{equation}
    (\mathscr{G}(K))f(x):=\left(\sum_{j\in \mathbb{Z}} |f\ast K_j(x)|^2  \right)^{\frac{1}{2}}
\end{equation}is a bounded operator from $L^p(G)$ to $L^p(G)$ for all $1<p<\infty.$    
\end{lemma}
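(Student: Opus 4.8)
The plan is to deduce the $L^p$-boundedness of $\mathscr{G}(K)$ from Lemma \ref{Lemma:main:dyadic} by a randomisation argument, in the spirit of Duoandikoetxea and Rubio de Francia \cite{Duoandikoetxea:RubioDeFrancia:86}. For a sequence of signs $\varepsilon=(\varepsilon_j)_{j\in\mathbb{Z}}\in\{-1,1\}^{\mathbb{Z}}$ I would introduce the operator $T_\varepsilon f:=\sum_{j\in\mathbb{Z}}\varepsilon_j\,f\ast K_j$ and, as a first step, prove that $T_\varepsilon$ is bounded on $L^p(G)$ for every $1<p<\infty$ with operator norm bounded independently of $\varepsilon$. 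This is obtained by re-running the proof of Lemma \ref{Lemma:main:dyadic} with $K_j$ replaced by $\varepsilon_jK_j$: writing $\varepsilon_jK_j=\sum_k\varepsilon_jK_j\ast\Phi_{j+k}$ and $\widetilde{T}_{k,\varepsilon}f:=\sum_j\varepsilon_j\,f\ast K_j\ast\Phi_{j+k}$, one checks that the two key estimates survive the insertion of the signs. Indeed, in Step~1 of that proof only the terms with $|m-j|\le 1$ contribute, because $\widehat{\Phi}_{j+k}(\pi)\widehat{\Phi}_{m+k}(\pi)\equiv 0_{H_\pi}$ whenever $|m-j|\ge 2$; on those terms $|\varepsilon_m\varepsilon_j|=1$, so the computation yields $\Vert\widetilde{T}_{k,\varepsilon}f\Vert_{L^2(G)}\lesssim 2^{-a|k|}\Vert f\Vert_{L^2(G)}$ with a constant independent of $\varepsilon$. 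In Step~2 the signs are absorbed by the triangle inequality in \eqref{auxiliar:weak:1:1:2}, so the kernel of $\widetilde{T}_{k,\varepsilon}$ still satisfies the H\"ormander condition with constant $\lesssim(1+|k|)$, again uniformly in $\varepsilon$. Marcinkiewicz interpolation and summation over $k\in\mathbb{Z}$ then give, exactly as before,
\begin{equation*}
    \sup_{\varepsilon\in\{-1,1\}^{\mathbb{Z}}}\Vert T_\varepsilon\Vert_{\mathscr{B}(L^p(G))}\lesssim\sum_{k\in\mathbb{Z}}2^{-a|k|\theta_p}(1+|k|)^{1-\theta_p}<\infty,\qquad \tfrac1p=\tfrac{\theta_p}{2}+(1-\theta_p),
\end{equation*}
for $1<p<2$, and the range $2<p<\infty$ follows by the duality argument (the adjoint of $T_\varepsilon$ is again of the same type, with $K$ replaced by $x\mapsto\overline{K(x^{-1})}$, which is still an $L^1$ distribution of compact support).

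The second step is the randomisation. Letting $\{r_j\}_{j\in\mathbb{Z}}$ be a system of independent Rademacher functions on $[0,1]$, Khintchine's inequality (see e.g. \cite{Grafakos}) gives, for every $0<p<\infty$, every $f\in C^\infty_0(G)$ and every $x\in G$,
\begin{equation*}
    \left(\sum_{j\in\mathbb{Z}}|f\ast K_j(x)|^2\right)^{p/2}\asymp_p\int_0^1\Bigl|\sum_{j\in\mathbb{Z}}r_j(t)\,f\ast K_j(x)\Bigr|^p\,dt.
\end{equation*}
Integrating in $x$, exchanging the $x$- and $t$-integrations by Tonelli's theorem, and recognising $\sum_jr_j(t)\,f\ast K_j=T_{\varepsilon(t)}f$ with $\varepsilon(t):=(r_j(t))_j\in\{-1,1\}^{\mathbb{Z}}$, the uniform bound from the first step gives $\Vert\mathscr{G}(K)f\Vert_{L^p(G)}^p\asymp_p\int_0^1\Vert T_{\varepsilon(t)}f\Vert_{L^p(G)}^p\,dt\lesssim\Vert f\Vert_{L^p(G)}^p$ for all $1<p<\infty$; a density argument then extends the inequality $\Vert\mathscr{G}(K)f\Vert_{L^p(G)}\lesssim\Vert f\Vert_{L^p(G)}$ from $C^\infty_0(G)$ to all of $L^p(G)$.

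The main obstacle I anticipate is the careful bookkeeping in the first step: one must verify that every estimate in the proof of Lemma \ref{Lemma:main:dyadic} is genuinely insensitive to the unimodular multipliers $\varepsilon_j$ and produces constants independent of $\varepsilon$. The delicate point is the $L^2$-orthogonality computation, and it is rescued precisely by the almost-orthogonality relation $\widehat{\Phi}_{j+k}(\pi)\widehat{\Phi}_{m+k}(\pi)\equiv 0_{H_\pi}$ for $|m-j|\ge 2$, which confines the cross-terms to $|\varepsilon_m\varepsilon_j|=1$. A secondary, routine, technical matter is justifying the convergence of the series $\sum_j\varepsilon_j\,f\ast K_j$ and the interchanges of summation and integration; this is handled by first establishing all the bounds for the finite partial sums $\sum_{|j|\le N}$, uniformly in $N$, and passing to the limit.
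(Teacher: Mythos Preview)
Your proposal is correct and follows essentially the same route as the paper: introduce $T_\varepsilon f=\sum_j\varepsilon_j\,f\ast K_j$, invoke the $L^p$-boundedness of $T_\varepsilon$ with constant independent of $\varepsilon$ (the paper simply cites Lemma~\ref{Lemma:main:dyadic} for this, whereas you spell out why the signs are harmless in Steps~1 and~2), and then apply Khintchine's inequality to pass to the square function. Your additional remarks on the uniform-in-$\varepsilon$ bookkeeping and on handling partial sums are sound and only make the argument more explicit.
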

\begin{proof}
    Let us give the classical probabilistic argument involving the Rademacher functions. So, given a sequence $\varepsilon=\{\varepsilon_j\}_{j\in \mathbb{Z}},$ where $\varepsilon_j=\pm 1,$ consider the operator
    \begin{equation}
        T_{\varepsilon}f(x)=\sum_{j\in \mathbb{Z}}\varepsilon_j f\ast K_j.
    \end{equation}In view of Lemma \ref{Lemma:main:dyadic}, $T_{\varepsilon}$ is bounded on $L^p(G)$ for all $1<p<\infty,$ and 
    \begin{equation}\label{choice}
        \Vert  T_\varepsilon\Vert_{\mathscr{B}(L^p)}\leq C_p,
    \end{equation} where $C_p$ is independent of the choice of the sequence $\varepsilon.$
 Let us consider the orthonormal system on $L^2[0,1]$ determined by the Rademacher functions $r_j,$ $j\in \mathbb{Z}.$ These are functions
defined via $r_j(t)=r_0(2^jt),$ where $r_0$ is defined via
\begin{equation}
    r_0(s)=-1,\, 0\leq s<1/2,\,\,r_0(s)=1,\, 1/2\leq s\leq 1.
\end{equation} The Rademacher functions $r_j,$ $j\in \mathbb{Z},$ satisfy the Khintchine inequality (see e.g. Grafakos \cite[Appendix C.2]{Grafakos}): 
\begin{itemize}
    \item If $F=\sum_j a_j r_j\in L^2[0,1],$ $\{a_j\in \ell^2\},$ then the Khintchine inequality
    \begin{equation}
    A_p    \Vert F\Vert_{L^p[0,1]}\leq \left(\sum_{j\in \mathbb{Z}} |a_j|^2  \right)^{\frac{1}{2}}\leq B_p \Vert F\Vert_{L^p[0,1]},
    \end{equation} holds for some $A_p,B_p>0.$
\end{itemize}Let $x\in G.$ If we apply this property with 
$ 
F(t)=(\mathscr{G}(K))f=\sum_{j\in \mathbb{Z}} f\ast K_j(x) r_j(t),
$ then
\begin{align*}
    \left(\sum_{j\in \mathbb{Z}} |f\ast K_j(x)|^2  \right)^{\frac{p}{2}}\leq B_p^p\int\limits_{[0,1]}\left|\sum_{j\in \mathbb{Z}} f\ast K_j(x) r_j(t)\right|^pdt. 
\end{align*}Now, integrating both sides of this inequality with respect to the Haar measure $dx,$ then for any $t\in [0,1],$ we have an operator like $T_\varepsilon=T_{ \{r_j(t)\}  }$ and the desired inequality
\begin{equation}
    \Vert (\mathscr{G}(K))f\Vert_{L^p(G)}\leq C_p \Vert f \Vert_{L^p(G)},
\end{equation}now follows from \eqref{choice}. The proof of Lemma \ref{Rademacher:lemma}  is complete.
\end{proof}

\subsection{Proof of the main theorem} Let $1<p\leq \infty.$
In this subsection, we prove the $L^p(G)$-boundedness of the dyadic maximal function \eqref{Maximal:Function:Graded} associated to a finite Borel measure $d\sigma$   with compact support on $G.$ We  assume that for some $a>0$ the group Fourier transform of $d\sigma$ satisfies the growth estimate in \eqref{FT;Condition:Measure}.

\begin{proof}[Proof of Theorem \ref{Main:Thn}] By the Riesz representation theorem we have that $d\sigma= Kdx,$ where $K$ is a distribution with compact support on $G$ that satisfies the group Fourier transform inequalities:
$
    \sup_{\pi\in\widehat{G}}\Vert\pi(\mathcal{R})^{\pm \frac{a}{\nu}}\widehat{K}(\pi)\Vert_{\textnormal{op}}<\infty.
$  Note that
\begin{align*}
     \mathcal{M}^{d\sigma}_Df(x) &=\sup_{j\in \mathbb{Z}}\left|\smallint\limits_{G}f\left(x\cdot (2^j\cdot y)^{-1}\right)d\sigma(y)\right|=\sup_{j\in \mathbb{Z}}\left|\smallint\limits_{G}f\left(x\cdot (2^j\cdot y)^{-1}\right)K(y)dy\right|\\
     &=\sup_{j\in \mathbb{Z}}\left|\smallint\limits_{G}f(xy^{-1})K_j(y)dy\right|, \, \forall y\in G,\,K_j(y):=2^{-jQ}K(2^{-j}y).
\end{align*}
Note that 
\begin{align*}
    f* {K}_j(x)=\smallint_Gf(xy^{-1})K_j(y)dy.
\end{align*}
In view of Lemma \ref{Rademacher:lemma}, we have that
\begin{equation}
 \mathcal{M}^{d\sigma}_Df(x)=\sup_{j\in \mathbb{Z}}\left|f\ast K_j(x)\right|\leq    (\mathscr{G}(K))f(x):=\left(\sum_{j\in \mathbb{Z}} |f\ast K_j(x)|^2  \right)^{\frac{1}{2}},
\end{equation}and in consequence we have proved the boundedness of $\mathcal{M}^{d\sigma}_D$ on $L^p(G)$ for all $1<p<\infty.$ The boundedness of $\mathcal{M}^{d\sigma}_D$ on $L^\infty(G)$ is clearly satisfied and in that case the $L^\infty(G)$-operator norm of $\mathcal{M}^{d\sigma}_D$ is bounded by the variation of the finite measure $d\sigma.$ The proof of Theorem \ref{Main:Thn} is complete.    
\end{proof}
\subsection{Conflict of interests statement.}   On behalf of all authors, the corresponding author states that there is no conflict of interest.

\subsection{Data Availability Statements.}  Data sharing does not apply to this article as no datasets were generated or
analysed during the current study.\\

\bibliographystyle{amsplain}

\end{document}